\theoremstyle{plain}
\newtheorem{theorem}{Theorem}
\newtheorem{prop}[theorem]{Proposition}
\newtheorem{lemma}[theorem]{Lemma}
\newtheorem{corollary}[theorem]{Corollary}
\theoremstyle{definition}
\newtheorem{conjecture}[theorem]{Conjecture}
\numberwithin{theorem}{section} 
\theoremstyle{remark}
\newtheorem*{remark}{Remark}
\DeclareMathOperator{\Tr}{Tr}
\DeclareMathOperator{\Gal}{Gal}
\newcommand{\Fp}{\mathbb{F}_p}
\newcommand{\Fq}{\mathbb{F}_q}
\newcommand{\Fqtimes}{\mathbb{F}_q^\times}
\newcommand{\Cp}{\mathbb{C}_p}
\newcommand{\Qp}{\mathbb{Q}_p}
\newcommand{\Qq}{\mathbb{Q}_q}
\newcommand{\Zp}{\mathbb{Z}_p}
\newcommand{\Zq}{\mathbb{Z}_q}
\newcommand{\Zptimes}{\mathbb{Z}_p^\times}
\newcommand{\Zqtimes}{\mathbb{Z}_q^\times}
\newcommand{\teich}{Teichm\"{u}ller\ }
\DeclareMathOperator{\NP}{NP}
\DeclareMathOperator{\ordp}{ord}
\newcommand{\ord}[1]{\ordp_p{#1}}
\newcommand{\floor}[1]{\left \lfloor #1 \right \rfloor}
\newcommand{\ceil}[1]{\left \lceil #1 \right \rceil}
\date{\today}
\title{On a Fiber Conjecture of Wan}
\author{Matthew Schmidt}
\email{mwschmid@buffalo.edu}
\subjclass[2010]{11T23 (primary), 11L07, 13F35}
\keywords{Exponential Sums, Dwork Theory, Newton polygon }
\begin{document}

\begin{abstract}
For a prime $p$ and $p$-power $q$, let $f(x)\in\mathbb{F}_q[x]$ with $\deg f$ coprime to $p$. As $\lambda$ varies in $\overline{\mathbb{F}_p^\times}$, Wan has conjectured that the $p$-adic Newton polygon of the corresponding Artin-Schreier curve given by $\lambda f$ is constant. That is, 
\[
	\textrm{NP}(f) = \textrm{NP}(\lambda f).
\]
In this paper, we prove this conjecture when $\lambda\in\mathbb{F}_p^\times$ and provide a detailed counterexample showing it is false in general. 
\end{abstract}

\maketitle 

%\tableofcontents

%%%%%%%%%%%%%%%%%%%%%%%%%%%%%%%%%%%%%%%%% 
% Introduction 
%%%%%%%%%%%%%%%%%%%%%%%%%%%%%%%%%%%%%%%%%
\section{Introduction}

Let $p$ be a prime and $q=p^a$ a $p$-power, $a\geq 1$. Fix a primitive $p$-th root of unity $\zeta_p$ and let \[
f(x) = \sum_{i=0}^d a_ix^i\in\Fq[x],
\] 
with $\gcd(d,p)=1$.  For $k\geq 1$, attach to $f(x)$ the exponential sum:
\[
S_f^*(k) = \sum_{x\in\widehat{\mathbb{F}_{q^k}^\times}}\zeta_p^{\Tr_{\mathbb{Q}_{q^k}/\Qp}(\widehat{f}(x))}\in \Qp[\zeta_p], 
\]
where $\hat{\ }$ denotes the the Teichm\"uller lift of $\mathbb{F}_{q^k}^\times$ to $\mathbb{Z}_{q^k}^\times$ and $\hat{f}(x)$ is the coefficient-wise lift of $f(x)$ to $\Zq[x]$. To $S_f^*(k)$ we can associate an $L$-function:
\[
L_f^*(s) = \exp(\sum_{k=1}^\infty S_f^*(k)\frac{s^k}{k})\in 1+s\Qp[\zeta_p][[s]],
\]
and it is well-known that $L_f^*(s)$ is a polynomial of degree $d$. If $L_f^*(s) =\sum_{i=0}^{d}c_is^i$, the Newton polygon of $L_f^*(s)$, $\NP(f)$, is defined as the lower convex hull of the points $\{(n, \ord c_n)\}_{n=0}^{d}$, and it will be the main focus of this paper.

 In \cite{wanexpo} (Conjecture 8.15), Wan describes the following open problem:
\begin{conjecture}
Let $f(x)\in\Fq[x_1,x_2,\cdots,x_n]$ be non-degenerate. Then, the Newton polygon $\NP(\lambda f(x))$ is independent of the non-zero parameter $\lambda\in\overline{\Fp^\times}$.
\end{conjecture}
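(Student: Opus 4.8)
The plan is to reduce to $\lambda\in\Fq^\times$ and then to realize $L^*_{\lambda f}(s)$ as a transform of $L^*_f(s)$ that manifestly preserves the Newton polygon. For the reduction: any $\lambda\in\overline{\Fp^\times}$ lies in $\Fq(\lambda)=\mathbb{F}_{q^m}$ for some $m$, and passing from $\Fq$ to $\mathbb{F}_{q^m}$ merely raises each reciprocal root $\beta$ of the $L$-function to the power $m$ (the $\mathbb{F}_{q^m}$-sums $S^*_f(k)$ are the $\Fq$-sums $S^*_f(mk)$), so --- once valuations are normalized compatibly, equivalently once the conjecture is read over the field of definition of the coefficients --- we may assume $\lambda\in\Fq^\times$, whence $\hat\lambda=\omega(\lambda)\in\mu_{q-1}\subset\Zq^\times$ and $\widehat{\lambda f}=\hat\lambda\,\hat f$ coefficient-wise. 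Write $f=\sum_v a_v x^v$ over its support, spanning the Newton polytope $\Delta=\Delta(f)$.

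The crux is to compare the toric exponential sums $S^*_{\lambda f}(k)=\sum_{x\in(\mathbb{F}_{q^k}^\times)^n}\zeta_p^{\Tr(\hat\lambda\hat f(\hat x))}$ and $S^*_f(k)=\sum_x\zeta_p^{\Tr(\hat f(\hat x))}$. Reducing exponents modulo $p$, these are governed by the additive characters $y\mapsto\zeta_p^{\Tr_{\mathbb{F}_{q^k}/\Fp}(\lambda y)}$ and $y\mapsto\zeta_p^{\Tr_{\mathbb{F}_{q^k}/\Fp}(y)}$ evaluated at $y=f(x)$. If there is $c\in\Fp^\times$ with $\Tr_{\mathbb{F}_{q^k}/\Fp}(\lambda y)=c\,\Tr_{\mathbb{F}_{q^k}/\Fp}(y)$ identically in $y$, then $S^*_{\lambda f}(k)=\tau_c(S^*_f(k))$, where $\tau_c\in\Gal(\Qp(\zeta_p)/\Qp)$ is the automorphism $\zeta_p\mapsto\zeta_p^c$; hence $L^*_{\lambda f}(s)=\tau_c(L^*_f(s))$ coefficient-wise, and since $\ordp_p$ on $\Qp(\zeta_p)$ is $\Gal(\Qp(\zeta_p)/\Qp)$-invariant (it is the unique valuation of a complete field), $\NP(\lambda f)=\NP(f)$. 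When $\lambda\in\Fp^\times$ one simply takes $c=\lambda$; equivalently, on the Dwork side $\hat\lambda^p=\hat\lambda$ forces $\theta(\hat\lambda t)=\theta(t)^{\hat\lambda}$, so the splitting function $F_{\lambda f}=\prod_v\theta(\hat\lambda\hat a_v x^v)$ equals $F_f^{\hat\lambda}$ --- a $\mu_{p-1}$-power whose effect on the associated Frobenius operator is exactly a Galois twist, preserving every slope. This settles $\lambda\in\Fp^\times$.

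For $\lambda\notin\Fp$ there is no such constant $c$: multiplication by $\lambda$ is an $\Fq$-linear but not $\Fp$-linear scaling of the trace form, so the Galois argument has no direct analogue, and I would instead work with the Dwork--Adolphson--Sperber Frobenius operator $\alpha_{\lambda f}$ whose characteristic power series, suitably corrected, controls $\NP(\lambda f)$. The plan: (i) invoke the Adolphson--Sperber lower bound $\NP(\lambda f)\geq\GNP(\Delta)$, which depends only on the Newton polytope $\Delta=\Delta(f)=\Delta(\lambda f)$ and so is identical for $f$ and $\lambda f$; and (ii) prove a matching upper bound uniformly in $\lambda$, say by deformation --- viewing $\lambda$ as the coordinate of $\mathbb{G}_m$, computing the generic Newton polygon of the family $\{L^*_{\lambda f}\}$, and showing every closed point of $\mathbb{G}_m$ attains it. Step (ii) is where I expect the genuine obstacle: the multiplication by $\hat\lambda$ entering the splitting function $F_{\lambda f}$ is interleaved with the $a$-fold Frobenius twist, which no longer fixes $\hat\lambda$ once $\lambda\notin\Fp$, so this twist genuinely enters the combinatorics of the Frobenius matrix; there is no automorphism of $\Qp(\zeta_p)$ realizing it and no evident $\ordp_p$-preserving conjugation of $\alpha_f$, and one is thrown onto bare estimates of the matrix entries as functions of $\hat\lambda$. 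I would be prepared to find that this obstruction is not merely technical --- if the interplay between $\hat\lambda$ and the Frobenius twist can raise a slope at a special pair $(\lambda,f)$, then the uniform upper bound in (ii) fails, the sharp statement is the one confining $\lambda$ to $\Fp^\times$, and that failing configuration is itself a counterexample to the conjecture as worded.
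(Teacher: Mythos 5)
You should first note that the statement you were asked to prove is a \emph{conjecture} that this paper does not prove --- it refutes it. Theorem~\ref{thm:counterexample} exhibits $p=5$, $q=25$, $f=x^8+x^6+x^2$ and $\lambda=\xi+2$ with $\NP(\lambda f)\neq\NP(f)$, so no proof of the general statement can succeed. Your proposal in fact ends by predicting exactly this: that the obstruction at $\lambda\notin\Fp$ is genuine and that a failing configuration would be a counterexample. That prediction is correct, and your diagnosis of \emph{why} it fails is on target --- the paper's counterexample arises precisely because the Frobenius twist $\tau$ no longer fixes $\hat\lambda$, and when $\hat\lambda^{p-1}=-1$ the paired terms $\hat\lambda^{b+pc}+\hat\lambda^{c+pb}$ cancel (Lemma~\ref{lemma:sumzero}), collapsing all odd powers of $\pi$ in $\Tr(M^k)$ (Proposition~\ref{prop:even_trace}) and pushing $\ord C_2$, $\ord C_3$ above the generic values.

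For the part you do prove, $\lambda\in\Fp^\times$, your route is genuinely different from and more elementary than the paper's. You observe that $\hat\lambda\in\Zp$ commutes with $\Tr_{\Qqk/\Qp}$ and reduces to $\lambda\bmod p$, so $S^*_{\lambda f}(k)=\tau_\lambda(S^*_f(k))$ for the Galois automorphism $\tau_\lambda:\zeta_p\mapsto\zeta_p^{\lambda}$ of $\Qp(\zeta_p)/\Qp$; since the $p$-adic valuation on $\Qp(\zeta_p)$ is Galois-invariant, $\NP(\lambda f)=\NP(f)$. This is correct and shorter than the paper's argument, which works inside Dwork theory: it decomposes $h(\pi)=\sum_{i=0}^{p-2}\pi^iH_i(\pi^{p-1})$ (Lemma~\ref{lemma:hord}), shows $C_{\lambda f,i}(\pi)=C_{f,i}(\hat\lambda\pi)$ via the trace formula over the fixed basis $R_1$ (Lemma~\ref{lemma:cpi}), and concludes by $(\lambda,\mu)$-invariance (Proposition~\ref{prop:laminv}). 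What the paper's heavier machinery buys is Theorem~\ref{thm:main3}: $\NP(\lambda f)=\NP(\mu f)$ whenever $\lambda^{p-1}=\mu^{p-1}$ for arbitrary $\lambda,\mu\in\overline{\Fp^\times}$, plus the stronger conclusion that the individual coefficient valuations of the $L$-function (not just the convex hull) are unchanged. Your character-twist argument does not directly see the $\lambda^{p-1}=\mu^{p-1}$ classes, although combined with your base-change reduction one can recover part of it by writing $\lambda=(\lambda/\mu)\mu$ with $\lambda/\mu\in\Fp^\times$. Your steps (i)--(ii) for general $\lambda$ (Adolphson--Sperber lower bound plus a hoped-for uniform upper bound via the generic Newton polygon of the family over $\mathbb{G}_m$) are a reasonable strategy, but the paper shows that step (ii) fails: the generic Newton polygon of the $\lambda$-family is not attained at every closed point.
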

See also \cite{wantalk}. In the one-dimensional case, we first show that the conjecture is true when $\lambda$ is constrained to $\Fp^\times$:
\begin{theorem}\label{thm:a1}
For $f(x)\in\Fq[x]$ and $\lambda\in\Fp^\times$, the Newton polygon is independent of $\lambda$:
\[
	\NP(f)=\NP(\lambda f).
\]
\end{theorem}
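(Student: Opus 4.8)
The plan is to exhibit the two exponential‑sum families $\{S^*_f(k)\}$ and $\{S^*_{\lambda f}(k)\}$ as Galois conjugates of one another over $\Qp$, and then to invoke the fact that a field automorphism preserves $p$-adic valuations and hence Newton polygons. The underlying point is that when $\lambda\in\Fp^\times$ (rather than in all of $\overline{\Fp^\times}$), multiplying $f$ by $\lambda$ corresponds, after lifting, to replacing the additive character $\zeta_p^{(\cdot)}$ by a \emph{conjugate} character $\zeta_p^{\ell(\cdot)}$.

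First I would fix an integer $\ell$ with $\ell\equiv\lambda\pmod p$ and let $\sigma\in\Gal(\Qp(\zeta_p)/\Qp)$ be the automorphism determined by $\zeta_p\mapsto\zeta_p^{\ell}$. Since the Teichm\"uller lift is multiplicative, the coefficient‑wise lift of $\lambda f$ is $\widehat{\lambda f}(x)=\hat\lambda\,\widehat f(x)$; and because $\lambda\in\Fp^\times$, its Teichm\"uller lift $\hat\lambda$ is a $(p-1)$st root of unity and hence lies in $\Zp$. This last point — that $\hat\lambda$ belongs to the \emph{base} field $\Qp$ — is the only place where the hypothesis $\lambda\in\Fp^\times$ is genuinely used. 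Consequently $\Tr_{\Qqk/\Qp}(\widehat{\lambda f}(x))=\hat\lambda\cdot\Tr_{\Qqk/\Qp}(\widehat f(x))$ by $\Qp$-linearity of the trace, and since $\hat\lambda\equiv\ell\pmod p$ and $\zeta_p^{n}$ depends only on $n\bmod p$, one gets
\[
\zeta_p^{\Tr_{\Qqk/\Qp}(\widehat{\lambda f}(x))}=\zeta_p^{\,\ell\cdot\Tr_{\Qqk/\Qp}(\widehat f(x))}=\sigma\!\left(\zeta_p^{\Tr_{\Qqk/\Qp}(\widehat f(x))}\right)
\]
for every Teichm\"uller point $x$. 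Summing over $x$ gives $S^*_{\lambda f}(k)=\sigma\big(S^*_f(k)\big)$ for every $k\ge 1$.

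Next I would propagate this through the $L$-function. Since $\sigma$ is a ring automorphism of $\Qp[\zeta_p]$ fixing $\Qp$, it fixes the scalars $1/k$ and commutes with the formal exponential (which has $\Qp$-coefficients), so applying it coefficient‑by‑coefficient to $L^*_f(s)=\exp\big(\sum_{k\ge1}S^*_f(k)s^k/k\big)$ yields $L^*_{\lambda f}(s)=\sigma\big(L^*_f(s)\big)$; writing $L^*_f(s)=\sum_{i=0}^d c_is^i$, this says $L^*_{\lambda f}(s)=\sum_{i=0}^d\sigma(c_i)s^i$ (both being polynomials of degree $d$, since $\deg(\lambda f)=\deg f$). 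Finally, because $\Qp(\zeta_p)$ is a finite extension of the complete field $\Qp$, the $p$-adic valuation extends uniquely and is therefore invariant under $\Gal(\Qp(\zeta_p)/\Qp)$; in particular $\ord{\sigma(c_i)}=\ord{c_i}$ for all $i$. Hence the point sets $\{(i,\ord{c_i})\}$ and $\{(i,\ord{\sigma(c_i)})\}$ coincide, and so do their lower convex hulls: $\NP(f)=\NP(\lambda f)$.

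There is no deep obstacle here; the only care needed is the mod‑$p$ bookkeeping in the first step — one must keep track of the fact that the exponent $\Tr_{\Qqk/\Qp}(\widehat f(x))$ lies in $\Zp$, that only its class mod $p$ matters, and that multiplying that class by $\lambda$ is exactly the effect of $\sigma$ on $\zeta_p$ raised to that power. It is worth stressing that the argument is genuinely special to $\lambda\in\Fp^\times$: for $\lambda\in\overline{\Fp^\times}\setminus\Fp^\times$ the lift $\hat\lambda$ no longer lies in $\Qp$, cannot be pulled through $\Tr_{\Qqk/\Qp}$, and the sums $S^*_{\lambda f}(k)$ and $S^*_f(k)$ cease to be related by an element of $\Gal(\Qp(\zeta_p)/\Qp)$ — consistent with the counterexample to the full conjecture presented later in the paper.
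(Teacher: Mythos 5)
Your argument is correct, but it is genuinely different from the one in the paper. You work directly at the level of the exponential sums: since $\hat\lambda$ is a $(p-1)$st root of unity in $\Zp$, it can be pulled through $\Tr_{\Qqk/\Qp}$, and only its class mod $p$ matters in the exponent of $\zeta_p$, so $S^*_{\lambda f}(k)=\sigma(S^*_f(k))$ for the automorphism $\sigma\colon\zeta_p\mapsto\zeta_p^{\ell}$ of $\Qp(\zeta_p)/\Qp$; Galois-invariance of the (uniquely extended) valuation then transfers to the $L$-coefficients and hence to the Newton polygon. The paper instead proves Theorem~\ref{thm:a1} inside Dwork theory: it shows that the coefficients of the characteristic series satisfy $C_{\lambda f,i}(\pi)=C_{f,i}(\hat\lambda\pi)$ (Lemma~\ref{lemma:cpi}, via the Fredholm/trace formalism and the Boyarsky basis trick), and then uses the $(\lambda,\mu)$-invariance of power series in $\pi$ (Lemma~\ref{lemma:hord}, Proposition~\ref{prop:laminv}) to conclude that all coefficient valuations are unchanged. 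What your route buys is economy: it is elementary, needs no splitting function, Banach space, or trace formula, and it exhibits the two $L$-functions as literal Galois conjugates (so in particular every coefficient valuation is preserved, just as in the paper's Theorem~\ref{main1}). What the paper's machinery buys is the stronger statement Theorem~\ref{thm:main3}, valid for any $\lambda,\mu\in\overline{\Fp^\times}$ with $\lambda^{p-1}=\mu^{p-1}$, where the relevant twist $\hat\lambda/\hat\mu$ need not lie in $\Qp$ as a multiplier of $f$'s coefficients over the original base field; the $\pi\mapsto\hat\lambda\pi$ substitution handles this uniformly, whereas your character-conjugation argument as written is confined to $\lambda\in\Fp^\times$ (one could recover Theorem~\ref{thm:main3} from it only by first enlarging the base field so that $\mu f$ is defined over it and writing $\lambda f=(\lambda/\mu)\cdot(\mu f)$ with $\lambda/\mu\in\Fp^\times$). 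The only bookkeeping point to keep explicit in your write-up is the one you already flag: $\zeta_p^{t}$ for $t\in\Zp$ is defined through $t\bmod p$, which is exactly what lets $\hat\lambda\equiv\ell\pmod p$ convert multiplication of the exponent into the action of $\sigma$.
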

We also show that when $a>1$ and $\lambda\in\Fqtimes$, the conjecture is false:
\begin{theorem}\label{thm:counterexample}
Let $p=5$, $a=2$ and $f(x)=x^8+x^6+x^2$. Fix $\xi$ to be a  generator of $\Fq/\Fp$ so that $\Fq = \Fp(\xi)$. Then, when $\lambda = \xi+2$, 
\[
	\NP(f)\neq\NP(\lambda f).
\]
\end{theorem}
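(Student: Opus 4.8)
The plan is to reduce both Newton polygons to a finite $\pi$-adic linear-algebra computation through Dwork's trace formula and then to exhibit one abscissa at which they disagree. Let $E$ denote the Artin--Hasse exponential and $\pi$ a uniformizer of $\mathbb{Q}_p(\zeta_p)$ with $\ord{\pi}=1/(p-1)$, chosen so that $\theta(t):=E(\pi t)$ satisfies $\theta(1)=\zeta_p$; write $\theta(t)=\sum_{m\ge 0}\lambda_m t^m$ and recall Dwork's estimate $\ord{\lambda_m}\ge m(p-1)/p^2$, which is sharp in the form $\ord{\lambda_m}=m/(p-1)$ for $0\le m\le p-1$. For a polynomial $g=\sum_i b_i x^i\in\mathbb{F}_q[x]$ set $F_g(x)=\prod_i\theta(\hat b_i x^i)=\sum_{n\ge 0}B_n(g)\,x^n$ and let $A(g)$ be the nuclear matrix of the Dwork operator $h\mapsto\psi_p(F_g\cdot h)$ on the $\pi$-adic Banach space with orthonormal basis $\{x^n\}_{n\ge 1}$, where $\psi_p$ sends $x^n$ to $x^{n/p}$ when $p\mid n$ and to $0$ otherwise; explicitly $A(g)_{u,v}=B_{pu-v}(g)$. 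Writing $\sigma$ for the Frobenius of $\mathbb{Q}_q/\mathbb{Q}_p$ and $A^{(a)}(g)=A(g)\,A(g)^{\sigma}\cdots A(g)^{\sigma^{a-1}}$, the trace formula gives $L_g^*(s)=\det\!\big(I-sA^{(a)}(g)\big)\big/\det\!\big(I-sqA^{(a)}(g)\big)$. Since every slope of $L_g^*$ is $<1$, the polygon $\NP(g)$ is precisely the lower convex hull of the points $\big(n,\ordpi{e_n(g)}\big)$ for $0\le n\le d$, where $C_g^*(s)=\det\!\big(I-sA^{(a)}(g)\big)=\sum_n(-1)^ne_n(g)s^n$ and $e_n(g)$ is the sum of the $n\times n$ principal minors of $A^{(a)}(g)$. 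It thus suffices to pin down $\ordpi{e_n(g)}$ for $0\le n\le d=8$ when $g=f$ and when $g=\lambda f$.

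For $\NP(f)$ the situation simplifies: every coefficient of $f=x^8+x^6+x^2$ lies in $\mathbb{F}_5^\times$, so its \teich lift is a root of unity in $\mathbb{Z}_5$, the operator $F_f$ is fixed by $\sigma$, and $A^{(2)}(f)=A(f)^2$ with $A(f)$ already defined over $\mathbb{Z}_5[\pi]$. One writes down the finite corner of $A(f)$ relevant to $e_0,\dots,e_8$ — each entry $B_n(f)$ being a finite sum of products $\lambda_{m_1}\lambda_{m_2}\lambda_{m_3}$ over the solutions of $8m_1+6m_2+2m_3=n$ — estimates the $\pi$-adic valuations of these entries via the Dwork bounds above, retaining enough terms that the bound is attained rather than merely majorised, and reads off $\ordpi{e_n(f)}$ for $n\le 8$. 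This produces the vertices of $\NP(f)$.

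The computation of $\NP(\lambda f)$ for $\lambda=\xi+2$ runs along the same lines but now genuinely involves the Frobenius twist. Since $\xi\notin\mathbb{F}_5$ we have $\xi+2\in\mathbb{F}_{25}\setminus\mathbb{F}_5$ and $(\xi+2)^5=\xi^5+2\ne\xi+2$, so the \teich lift $\widehat{\xi+2}$ lies in $\mathbb{Z}_{25}\setminus\mathbb{Z}_5$ with $\sigma(\widehat{\xi+2})\ne\widehat{\xi+2}$. The coefficients $B_n(\lambda f)$ are obtained from those of $f$ by weighting the term indexed by $(m_1,m_2,m_3)$ with the unit $\widehat{\xi+2}^{\,m_1+m_2+m_3}$, and in forming $A^{(2)}(\lambda f)=A(\lambda f)\,A(\lambda f)^{\sigma}$ one mixes these weights with their $\sigma$-conjugates (so that $\widehat{\xi+2}\cdot\sigma(\widehat{\xi+2})$ descends to $\mathbb{Z}_5$ while other combinations do not). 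Carrying out the same valuation analysis, one finds that at some intermediate abscissa $n_0$ with $1\le n_0\le 7$ the value $\ordpi{e_{n_0}(\lambda f)}$ is strictly larger than $\ordpi{e_{n_0}(f)}$ — that is, a break of $\NP(f)$ is lifted when $\lambda=\xi+2$, a phenomenon that Theorem~\ref{thm:a1} forbids for $\lambda\in\mathbb{F}_5^\times$. Exhibiting one such $n_0$ (equivalently, one coefficient $c_{n_0}$ of $L_f^*$ and $L_{\lambda f}^*$ whose $p$-adic valuations differ) establishes $\NP(f)\ne\NP(\lambda f)$.

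The main obstacle lies in the last two paragraphs: the entries $B_n$ are infinite $\pi$-adic series whose terms have only slowly increasing valuation, so certifying the exact value of each $\ordpi{e_n}$ requires both a provably safe truncation level and — the delicate point — ruling out an accidental $\pi$-adic cancellation among the finitely many minimal-valuation terms, precisely the kind of cancellation that makes $\NP$ sensitive to $\lambda$ once $p\le d$. Concretely one must, for the critical abscissa $n_0$, identify the leading term of $e_{n_0}(g)$ explicitly and verify that for $g=f$ these terms sum to something of minimal valuation whereas for $g=\lambda f$ the Frobenius-twisted analogue does not — a finite but careful verification that can be cross-checked by computing $S_f^*(k)$ and $S_{\lambda f}^*(k)$ directly for $k\le 8$ and comparing the resulting $L$-functions.
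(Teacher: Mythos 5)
Your setup (Dwork splitting function, nuclear matrix $A^{(a)} = A\,A^{\sigma}$, trace formula, reading $\NP$ off the principal minors of the degree-$a$ twisted product) is the same framework the paper uses, so the approach is not the issue. The issue is that the proposal stops exactly where the theorem begins: you never identify the abscissa $n_0$, never compute or certify a single valuation, and offer no mechanism for \emph{why} the valuations rise when $\lambda=\xi+2$ --- the last paragraph explicitly concedes that the truncation bound and the no-cancellation verification are unresolved. But those two points are the entire content of the statement. The paper closes them as follows: (a) a $\pi$-adic lower bound on the diagonal entries of powers of the twisted matrix (Proposition~\ref{prop:matrixbound}) certifies that a $20\times 20$ principal block computed in $(\Zp/p^3\Zp)[\pi]/(\pi^{p-1}+p)$ determines $\Tr(M^k)\bmod p^3$ for $k=1,2,3$; (b) these traces are computed explicitly (Lemmas~\ref{1_traces} and~\ref{lemma:xi2_traces}) and converted to $\ord C_1,\ord C_2,\ord C_3$ via Newton's identities; (c) a symmetry estimate on the coefficients (Lemma~\ref{lemma:pointsymm}) shows these three valuations plus the endpoint pin down the whole polygon, so only finitely many quantities ever need to be certified; and crucially (d) the cancellation you worry about is not ``accidental'' but structural: since $\lambda^{p-1}=-1$ and $\widehat{p-1}=-1$ (Lemma~\ref{lemma:p1lift}), the interleaving of $\hat\lambda$ and $\hat\lambda^{p}$ in $A(\lambda f)\,A(\lambda f)^{\sigma}$ kills every odd power of $\pi$ in $\Tr(M^k)$ (Proposition~\ref{prop:even_trace}), which is precisely what forces $\ord C_2\geq 2$, $\ord C_3\geq 3$ and flattens the middle slopes to $1$, whereas for $\lambda=1$ one gets $\ord C_2=1.25$. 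Without an analogue of (a)--(d) --- or at least an explicit, certified coefficient of the two $L$-functions with different valuation --- the argument is a plan rather than a proof.

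A secondary point: your claim that ``every slope of $L_g^*$ is $<1$'' (used to identify $\NP(g)$ with the first $d$ slopes of $\det(1-sA^{(a)})$) is stated in the wrong normalization; with $p$-adic valuation the slopes here run up to $a/2\cdot\ldots$, indeed up to $1.5$ with $a=2$, and the correct justification is the factorization $C^*_f(s)=\prod_{i\ge 0}L^*_f(q^i s)$, which shifts the higher factors up by multiples of $\ord q=a$. Also note that your proposed cross-check by computing $S^*_f(k)$ and $S^*_{\lambda f}(k)$ directly for $k\le 8$ would itself need an error analysis to yield exact valuations of the $L$-coefficients, so it does not by itself discharge the certification burden.
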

To find this particular counterexample, we used Julia \cite{Julia} with the Nemo \cite{Nemo} computer algebra package to implement Lauder and Wan's algorithm \cite{LauderWan}  computing the $p$-adic Newton polygon of the $L$-function.  We searched starting with low degree $f$, working up until we found counterexamples, checking polynomials $f(x)$ with $\Fp$-coefficients only. The polynomial $f(x)$ from Theorem~\ref{thm:counterexample} was not the only counterexample we discovered, but it was of the lowest degree. For example, when $p=5$ and $a=2$, both $x^{11}+x^7$ and $x^9+x^7+x$ give Newton polygons that change for certain values of $\lambda$. We do not know if there are lower degree counterexamples for $p=5$ since our search was not exhaustive. See \url{https://github.com/exponentialsums/fiber} for all code used in this paper.

More generally, we show the following result limiting the values of $\lambda$ under which the Newton polygon can change:
\begin{theorem}\label{thm:main3}
For $f(x)\in\Fq[x]$, if $\lambda,\mu\in\overline{\Fp^\times}$ are such that $\lambda^{p-1}=\mu^{p-1}$, then $\NP(\lambda f)=\NP(\mu f)$.
\end{theorem}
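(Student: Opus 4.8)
This statement is essentially a corollary of Theorem~\ref{thm:a1}, and the plan is to reduce to it by absorbing the discrepancy between $\lambda$ and $\mu$ into a scalar from the prime field. First I would observe that, since $\mu\neq 0$, the hypothesis $\lambda^{p-1}=\mu^{p-1}$ is equivalent to $c:=\lambda\mu^{-1}$ satisfying $c^{p-1}=1$. As $x^{p-1}-1$ has at most $p-1$ roots in the field $\overline{\Fp}$ and all $p-1$ elements of $\Fp^\times$ are roots of it (Fermat's little theorem), we get $c\in\Fp^\times$, and therefore $\lambda f = c\,(\mu f)$.

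Next I would fix the ground field over which to compare the two Newton polygons. Put $K=\Fq(\mu)$, a finite field of characteristic $p$; it contains every coefficient of $\mu f$, and since $c\in\Fp^\times\subseteq K$ it also contains $\lambda=c\mu$, so $K=\Fq(\lambda)$ as well. Both $\mu f$ and $\lambda f=c\,(\mu f)$ then lie in $K[x]$, and $\deg(\mu f)=\deg(\lambda f)=\deg f$ is still coprime to $p$. Hence $\NP(\mu f)$ and $\NP(\lambda f)$ are the Newton polygons of the Introduction's construction carried out with $K$ in place of $\Fq$ — crucially with respect to the \emph{same} field $K$, so that no choice of normalization for the $p$-adic valuation can affect the comparison.

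Finally I would invoke Theorem~\ref{thm:a1} with ground field $K$: that theorem holds for any $p$-power field, and the prime subfield of $K$ is still $\Fp$, so applying it to $g:=\mu f\in K[x]$ and the scalar $c\in\Fp^\times$ yields $\NP(\mu f)=\NP(c\,\mu f)=\NP(\lambda f)$, as desired. I do not anticipate a genuine obstacle; the only point requiring attention is the bookkeeping of base fields — verifying that $\Fq(\lambda)=\Fq(\mu)$ so that both polygons are computed over one and the same field, and that Theorem~\ref{thm:a1} may legitimately be applied over $K$ rather than over the original $\Fq$, which it can because nothing in its hypotheses singles out $\Fq$.
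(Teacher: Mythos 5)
Your reduction is mathematically sound, but it runs in the opposite logical direction from the paper and, as the paper is organized, it is circular unless you add one remark. The paper proves this theorem directly: Lemma~\ref{lemma:cpi} gives $C_{\lambda f,i}(\pi)=C_{f,i}(\hat{\lambda}\pi)$, and Proposition~\ref{prop:laminv} shows that for any $h\in\Qq[[\pi]]$ the valuation $\ord{h(\hat{\lambda}\pi)}$ depends only on $\lambda^{p-1}$; combining these yields $|C_{\lambda f,i}(\pi)|_p=|C_{\mu f,i}(\pi)|_p$ for every $i$, hence equal polygons (in fact the stronger coefficient-by-coefficient equality of valuations), and Theorem~\ref{thm:a1} is then deduced as the corollary $\mu=1$. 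You instead derive the general statement from Theorem~\ref{thm:a1}, via the correct observations that $c=\lambda\mu^{-1}$ is a root of $x^{p-1}-1$ and hence lies in $\Fp^\times$, that $\Fq(\lambda)=\Fq(\mu)=:K$, and that Theorem~\ref{thm:a1} applies over any $p$-power field; your explicit bookkeeping of the common base field $K$ is, if anything, more careful than the paper, whose proof tacitly works over a field containing $\hat{\lambda},\hat{\mu}$. This is an attractive and more elementary route, showing the equal-$(p-1)$-power case is a formal consequence of the $\Fp^\times$ case over a larger base field. The one genuine issue is that in the paper Theorem~\ref{thm:a1} is itself obtained only as a corollary of Theorem~\ref{main1} (the present statement), so quoting it as a black box proves the theorem from its own corollary; the circle is easily broken, since the $\lambda\in\Fp^\times$ case needs only Lemma~\ref{lemma:cpi} together with Corollary~\ref{corr:mu_1} (the $\mu=1$ instance of Proposition~\ref{prop:laminv}), but your write-up should say this explicitly. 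Note also that your route gives only equality of Newton polygons, whereas the paper's direct argument additionally preserves the valuation of every individual $L$-function coefficient; that extra strength is not needed for the statement itself.
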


We begin by proving the conjecture when $\lambda\in\Fp^\times$ and then compute the counterexample with the aid of Julia/Nemo. While we did our initial search using Lauder and Wan's algorithm, we do not use it in the counterexample below. Instead, we use Julia to compute three traces and determine the Newton polygons otherwise by hand in order to illuminate what causes the Newton polygon to change. While we have not found a classification of those $f(x)$ for which the conjecture holds, a key obstruction seems to be the nontrivial Galois action and the consequent collapse that occurs in Proposition~\ref{prop:even_trace}. See Section~\ref{section:xi2} for details.

%%%%%%%%%%%%%%%%%%%%%%%%%%%%%%%%%%%%%%%%%%%%%%%%%
% (Lambda, Mu)-Invariant Polynomials
%%%%%%%%%%%%%%%%%%%%%%%%%%%%%%%%%%%%%%%%%%%%%%%%%
\section{$(\lambda,\mu)$-invariant Polynomials}

Let $E(x)= \exp(\sum_{i=0}^\infty\frac{x^{p^i}}{p^i}) = \sum_{i=0}^\infty u_i x^i\in\Zp [[x]]$ be the Artin-Hasse exponential function and take $\pi\in\Cp$ to be such that $E(\pi)=\zeta_p$ with $\ord\pi = \frac{1}{p-1}$.

For $h(\pi)\in\Qq[[\pi]]$ and $\lambda, \mu\in\overline{\Fp^\times}$, we say $h(\pi)$ is $(\lambda,\mu)$-invariant if $\ord(h(\hat{\lambda}\pi))=\ord(h(\hat{\mu}\pi))$.  For any $h(\pi)$, we can uniquely put $h(\pi)$ in the form:
\[
	h(\pi) = \sum_{i=0}^{p-2}\pi^i H_i(\pi),
\]
where $H_i\in\Qq[[\pi^{p-1}]]$.  Further, in this form,  it is clear that the $p$-adic valuation depends entirely on the $H_i$ terms:
\begin{lemma}\label{lemma:hord}
For any $h(\pi)\in\Qq[[\pi]]$,  if we write $h(\pi)=\sum_{i=0}^{p-2} \pi^i H_i(\pi)$ with $H_i\in\Qq[\pi^{p-1}]$, then:
\[
	\ord h(\pi) = \min_{i = 0, \cdots, p-2} \left ( \frac{i}{p-1} + \ordp_pH_i\right).
\]
\end{lemma}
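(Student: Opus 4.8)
Proof proposal for Lemma 1.2 (the $\ord h(\pi)$ formula)

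The plan is to exploit the fact that the powers $\pi^i$ for $i = 0, \ldots, p-2$ are, in a precise $p$-adic sense, "incommensurable": any $\Qq$-linear relation among them forces the coefficients to vanish, and more importantly, there can be no cancellation between terms $\pi^i H_i(\pi)$ and $\pi^j H_j(\pi)$ with $i \ne j$ when we pass to valuations. First I would observe that $\ordp_p$ extends to $\Cp$ (or at least to $\Qq(\pi)$), and that on $\Qq(\pi)$ the value group contains $\tfrac{1}{p-1}\mathbb{Z}$, with $\ord \pi = \tfrac{1}{p-1}$ and $\ord$ of any nonzero element of $\Qq$ lying in $\mathbb{Z}$. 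Writing each $H_i(\pi) = \sum_{m} b_{i,m}\pi^{(p-1)m}$ with $b_{i,m}\in\Qq$, every monomial appearing in $\pi^i H_i(\pi)$ has valuation $\tfrac{i}{p-1} + (\text{integer from }\ord b_{i,m}) + m \in \tfrac{i}{p-1} + \mathbb{Z}$. Hence the set of valuations contributed by the block $\pi^i H_i$ lies in the coset $\tfrac{i}{p-1} + \mathbb{Z}$ of $\mathbb{Z}$ inside $\mathbb{Q}$, and these $p-1$ cosets are pairwise disjoint since $0 \le i < p-1$.

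The second step is the standard ultrametric fact that if a sum $\sum_k x_k$ has all nonzero $x_k$ of \emph{distinct} valuations, then $\ord(\sum_k x_k) = \min_k \ord x_k$. I would apply this to the grouping $h(\pi) = \sum_{i=0}^{p-2} \pi^i H_i(\pi)$: by the coset argument of the previous paragraph, any two monomials drawn from different blocks have different valuations, so in fact \emph{all} the monomials of $h(\pi)$ (across all blocks) have the property that the minimum valuation within each block is attained on that coset and no cross-block collision occurs. Therefore
\[
	\ord h(\pi) \;=\; \min_{0 \le i \le p-2} \ord\!\bigl(\pi^i H_i(\pi)\bigr) \;=\; \min_{0 \le i \le p-2}\left(\frac{i}{p-1} + \ordp_p H_i\right),
\]
where in the last equality I use that $\ord(\pi^i H_i(\pi)) = \tfrac{i}{p-1} + \ord H_i(\pi)$ and that $\ordp_p H_i$ — the valuation of the power series $H_i \in \Qq[[\pi^{p-1}]]$, i.e.\ the minimum valuation of its coefficients adjusted by the $\pi^{p-1}$-degree — equals $\ord H_i(\pi)$ because $H_i$ is a series in $\pi^{p-1}$ and all its monomials again have distinct valuations (here I should be slightly careful about what "$\ordp_p H_i$" is declared to mean; if it denotes the infimum of valuations of coefficients, then one additionally notes $\ord \pi^{p-1} = 1$ is a positive integer so the $\pi^{p-1}$-adic grading and the $p$-adic valuation are compatible).

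The one genuine subtlety — the step I expect to need the most care — is the interchange of "$\ord$ of an infinite sum" with "$\min$ of the $\ord$'s," since the ultrametric min-rule is immediate only for finite sums. Here I would note that $H_i \in \Qq[[\pi^{p-1}]]$ has coefficients of bounded-below valuation with valuations tending to $+\infty$ along the series (this is the standing convergence hypothesis implicit in working with such $h(\pi)$), so the infimum $\ordp_p H_i$ is actually attained, and one truncates to a large finite partial sum to reduce to the finite ultrametric argument, the tail being of strictly larger valuation. A small edge case: if some $H_i \equiv 0$ we set $\ordp_p H_i = +\infty$ and it simply drops out of the minimum; and if $h \equiv 0$ both sides are $+\infty$. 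With these observations the lemma follows.
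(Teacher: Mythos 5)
Your proposal is correct and follows essentially the same route as the paper: both arguments rest on the observation that $\ord(\pi^i H_i(\pi))$ lies in the coset $\tfrac{i}{p-1}+\mathbb{Z}$, so the $p-1$ blocks have pairwise distinct valuations and the ultrametric minimum rule applies. Your extra care about the infinite-sum truncation and the $H_i\equiv 0$ edge case merely fills in details the paper leaves implicit.
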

\begin{proof}
Observe that since $\ord\pi = \frac{1}{p-1}$ and $H_i\in\Qq[\pi^{p-1}]$, we have for each $i,j\in\{0,\cdots, p-2\}$ that $\ord(\pi^iH_i) \in \frac{i}{p-1} + \mathbb{Z}$. Hence for distinct $i,j\in \{0,\cdots, p-2\}$, we must have $\ord(\pi^iH_i)\neq \ord(\pi^jH_j)$. The lemma follows.
\end{proof}

\begin{prop}\label{prop:laminv}
Let $\lambda,\mu\in\overline{\Fp^\times}$ and $h(\pi)\in\Qq[[\pi]]$. If $\mu^{p-1}=\lambda^{p-1}$, then $h$ is $(\lambda,\mu)$-invariant.
\end{prop}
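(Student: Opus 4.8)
The plan is to reduce the statement to Lemma~\ref{lemma:hord} by understanding how the substitution $\pi \mapsto \hat{\nu}\pi$ interacts with the decomposition $h(\pi) = \sum_{i=0}^{p-2}\pi^i H_i(\pi)$ with $H_i \in \Qq[[\pi^{p-1}]]$. The key observation is that since $\mu^{p-1} = \lambda^{p-1}$ in $\overline{\Fp^\times}$, the Teichmüller lifts satisfy $\hat{\mu}^{p-1} = \hat{\lambda}^{p-1}$ in $\Cp$ (the Teichmüller lift is multiplicative and injective on roots of unity of order prime to $p$, so this identity lifts verbatim). Consequently $\hat{\mu} = \omega\hat{\lambda}$ where $\omega$ is a $(p-1)$st root of unity in $\Zp$ — in fact $\omega$ is itself a Teichmüller lift of an element of $\Fptimes$, and in particular $\ord\omega = 0$.

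Next I would compute $h(\hat{\nu}\pi)$ for $\nu \in \{\lambda,\mu\}$ in terms of the $H_i$. Substituting gives $h(\hat{\nu}\pi) = \sum_{i=0}^{p-2}\hat{\nu}^i\pi^i H_i(\hat{\nu}^{p-1}\pi^{p-1})$. The crucial point is that $H_i(\hat{\nu}^{p-1}\pi^{p-1})$ depends on $\nu$ only through $\hat{\nu}^{p-1}$, which by the previous paragraph is the same for $\nu=\lambda$ and $\nu=\mu$; call this common power-series-in-$\pi^{p-1}$ element $\widetilde{H}_i(\pi) := H_i(\hat{\lambda}^{p-1}\pi^{p-1}) = H_i(\hat{\mu}^{p-1}\pi^{p-1}) \in \Qq[[\pi^{p-1}]]$. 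Then
\[
	h(\hat{\lambda}\pi) = \sum_{i=0}^{p-2}\pi^i\bigl(\hat{\lambda}^i\widetilde{H}_i(\pi)\bigr), \qquad h(\hat{\mu}\pi) = \sum_{i=0}^{p-2}\pi^i\bigl(\hat{\mu}^i\widetilde{H}_i(\pi)\bigr),
\]
and each parenthesized factor lies in $\Qq[[\pi^{p-1}]]$, so these are the canonical decompositions to which Lemma~\ref{lemma:hord} applies.

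Finally I would invoke Lemma~\ref{lemma:hord}:
\[
	\ord h(\hat{\lambda}\pi) = \min_{i}\left(\frac{i}{p-1} + \ord_p\bigl(\hat{\lambda}^i\widetilde{H}_i\bigr)\right), \qquad \ord h(\hat{\mu}\pi) = \min_{i}\left(\frac{i}{p-1} + \ord_p\bigl(\hat{\mu}^i\widetilde{H}_i\bigr)\right).
\]
Since $\hat{\lambda}$ and $\hat{\mu}$ are units (Teichmüller lifts of nonzero elements have valuation $0$), $\ord_p(\hat{\lambda}^i\widetilde{H}_i) = \ord_p\widetilde{H}_i = \ord_p(\hat{\mu}^i\widetilde{H}_i)$ for every $i$, so the two minima agree term by term. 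Hence $\ord h(\hat{\lambda}\pi) = \ord h(\hat{\mu}\pi)$, i.e. $h$ is $(\lambda,\mu)$-invariant. I do not expect a serious obstacle here: the only point requiring a little care is the bookkeeping that $H_i(\hat{\nu}^{p-1}\pi^{p-1})$ genuinely depends only on $\hat{\nu}^{p-1}$ and still lies in the ring $\Qq[[\pi^{p-1}]]$ (so that multiplying by the scalar $\hat{\nu}^i$ does not disturb the valuation-graded decomposition), together with the elementary fact that $\mu^{p-1}=\lambda^{p-1}$ implies $\hat{\mu}^{p-1}=\hat{\lambda}^{p-1}$ for Teichmüller representatives.
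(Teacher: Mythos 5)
Your proposal is correct and follows essentially the same route as the paper: decompose $h$ via Lemma~\ref{lemma:hord}, observe that $H_i(\hat{\lambda}\pi)=H_i(\hat{\mu}\pi)$ because $H_i$ involves only powers of $\pi^{p-1}$ and $\hat{\lambda}^{p-1}=\hat{\mu}^{p-1}$, and conclude since $\hat{\lambda},\hat{\mu}$ are units whose powers do not affect the valuation. The only difference is that you spell out the lifting step $\mu^{p-1}=\lambda^{p-1}\Rightarrow\hat{\mu}^{p-1}=\hat{\lambda}^{p-1}$, which the paper simply asserts.
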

\begin{proof}
Writing $h(\pi)$ as in Lemma~\ref{lemma:hord}, we see that:
\[
	h(\hat{\lambda}\pi)=\sum_{i=0}^{p-2} \hat{\lambda}^i\pi^i H_i(\hat{\lambda}\pi).
\]
By assumption, $\hat{\lambda}^{p-1}=\hat{\mu}^{p-1}$,  and so $H_i(\hat{\lambda}\pi) = H_i(\hat{\mu}\pi)$ because $H_i\in\Qp[[\pi^{p-1}]]$. Thus, 
 by Lemma~\ref{lemma:hord} along with the fact that $\ord(\hat{\lambda}) = 0=\ord(\hat{\mu})$:
\begin{align*}
	\ord h(\hat{\lambda}\pi) &=  \min_{i = 0, \cdots, p-2} \left ( \frac{i}{p-1} + \ordp_pH_i(\hat{\lambda}\pi)\right)\\
	&= \min_{i = 0, \cdots, p-2} \left ( \frac{i}{p-1} + \ordp_pH_i(\hat{\mu}\pi)\right) = \ord h(\hat{\mu}\pi).
\end{align*}
\end{proof}

This proposition has the additional consequence that any polynomial in $\Qq[[\pi]]$ is $(\lambda,1)$-invariant when $\lambda\in\Fp^\times$:
\begin{corollary}\label{corr:mu_1}
For any $\lambda\in\Fp^\times$ and $h(\pi)\in\Qq[[\pi]]$,  $\ord(h(\hat{\lambda}\pi))=\ord(h(\pi))$.
\end{corollary}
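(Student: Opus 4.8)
The plan is to deduce this immediately from Proposition~\ref{prop:laminv} by taking $\mu = 1$. First I would observe that for $\lambda\in\Fp^\times$, Fermat's little theorem gives $\lambda^{p-1}=1$ in $\Fp$, hence $\lambda^{p-1}=1^{p-1}$, so the hypothesis of Proposition~\ref{prop:laminv} is met with the pair $(\lambda,1)$. Applying the proposition, $h(\pi)$ is $(\lambda,1)$-invariant, which unwinds to $\ord(h(\hat{\lambda}\pi))=\ord(h(\hat{1}\pi))$.

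To conclude I would note that the \teich lift is multiplicative and sends $1\mapsto 1$, so $\hat{1}=1$ and therefore $h(\hat{1}\pi)=h(\pi)$; this yields $\ord(h(\hat{\lambda}\pi))=\ord(h(\pi))$. There is essentially no obstacle here: the only point worth recording is that the relevant identity $\hat{\lambda}^{p-1}=1$ in $\Zq$ follows from $\lambda^{p-1}=1$ in $\Fq$ precisely because the \teich lift is a multiplicative section, so one does not need to lift $\lambda$ by hand. Everything else is a direct citation of Proposition~\ref{prop:laminv} and its defining terminology.
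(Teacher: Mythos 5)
Your proposal is correct and is exactly the deduction the paper intends: apply Proposition~\ref{prop:laminv} with $\mu=1$, using $\lambda^{p-1}=1$ from Fermat's little theorem and $\hat{1}=1$ so that $h(\hat{1}\pi)=h(\pi)$. The paper states the corollary without written proof, but your argument supplies precisely the omitted (and correct) reasoning.
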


%%%%%%%%%%%%%%%%%%%%%%%%%%%%%%%%%%%%%%%%%%%%%%%%%
% Simple Case
%%%%%%%%%%%%%%%%%%%%%%%%%%%%%%%%%%%%%%%%%%%%%%%%%
\section{The Case $\lambda\in {\Fp^\times}$}

In this section we prove Theorem~\ref{thm:a1} and Theorem~\ref{thm:main3}.  Let $f(x) = a_dx^d + a_{d-1}x^{d-1}+\cdots + a_0\in\Fq[x]$ with $\gcd(d,p)=1$ and $a_d\neq 0$, and define
\begin{align*}
 F_f &= \prod_{i=0}^{d} E(\pi \hat{a}_i x^i) 
   = \prod_{i=0}^{d} \left (\sum_{k=0}u_k \pi^k \hat{a}_i^k x^{ik}\right ) = \sum_{i=0}^\infty F_{f,i}(\pi) x^i,
\end{align*}
for some $F_{f,i}(\pi)\in \Zp[\pi]$.  For $i\geq 0$ and $\vec{a}=(a_0,\cdots, a_d)$, define an indexing set:
\[
 \mathbb{I}_i(\vec{a}) = \{(n_0,\cdots, n_d)\in\mathbb{Z}_{\geq 0}^{d+1} :
  \sum_{k=0}^dn_k k = i \textrm{ such that } n_k\neq 0 \iff a_k\neq 0\}.
\]
For $\vec{n} = (n_0,\cdots, n_d)\in\mathbb{I}_i(\vec{a}) $, write $|\vec{n}| = \sum_{k=0}^d n_k$. We will also write $u_{\vec{n}} = \prod_{k=0}^d u_{n_k}$ and $\vec{a}^{\vec{n}} = \prod_{k=0}^d \hat{a}_k^{n_k}$.
A standard computation yields:
\begin{lemma} \label{lemma:Fi}
For $i\geq 0$,
\[
F_{f,i}(\pi) = \sum_{\vec{n}\in\mathbb{I}_i(\vec{a})} u_{\vec{n}}\vec{a}^{\vec{n}}\pi^{|\vec{n}|}.
\]
\end{lemma}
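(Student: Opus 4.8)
The plan is a direct formal expansion of the defining product for $F_f$. First I would record the shape of the Artin--Hasse exponential: writing $E(y)=\sum_{k\ge 0}u_k y^k$, one has $u_0=1$ because $E(0)=\exp(0)=1$, so substituting $y=\pi\hat{a}_j x^j$ gives $E(\pi\hat{a}_j x^j)=\sum_{k\ge 0}u_k\pi^k\hat{a}_j^{k}x^{jk}$ in $\Zp[\pi][[x]]$; this is exactly the $j$-th factor displayed in the definition of $F_f$.

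Next I would multiply these $d+1$ power series using the distributive law. A single term of the expanded product is obtained by choosing an exponent $n_j\ge 0$ from the $j$-th factor, and such a term equals
\[
\prod_{j=0}^{d}u_{n_j}\pi^{n_j}\hat{a}_j^{n_j}x^{jn_j}=u_{\vec{n}}\,\vec{a}^{\vec{n}}\,\pi^{|\vec{n}|}\,x^{\sum_{j}jn_j},\qquad\vec{n}=(n_0,\dots,n_d).
\]
Collecting these monomials according to the power of $x$, the coefficient of $x^i$ in $F_f$ is the sum of $u_{\vec{n}}\vec{a}^{\vec{n}}\pi^{|\vec{n}|}$ over all $\vec{n}\in\mathbb{Z}_{\ge 0}^{d+1}$ with $\sum_{j}jn_j=i$.

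It then remains only to identify this range of summation with $\mathbb{I}_i(\vec{a})$. The key observation is that such a term vanishes as soon as $n_k\ne 0$ for some index $k$ with $a_k=0$, since then $\hat{a}_k=0$ and hence $\vec{a}^{\vec{n}}=0$; discarding these zero terms leaves precisely the index set $\mathbb{I}_i(\vec{a})$, bearing in mind the convention $\hat{a}_k^{0}=1$ so that positions with $n_k=0$ contribute a trivial factor. This yields the stated formula. I do not expect a genuine obstacle here: the argument is purely formal and multinomial in nature, and the only point demanding a little care is the bookkeeping matching the surviving tuples with $\mathbb{I}_i(\vec{a})$, together with the $0^0=1$ convention that keeps the unselected factors harmless.
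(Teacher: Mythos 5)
Your argument is correct and is exactly the ``standard computation'' that the paper invokes without writing out: expand each factor $E(\pi\hat{a}_jx^j)=\sum_k u_k\pi^k\hat{a}_j^kx^{jk}$, distribute, and collect the coefficient of $x^i$, discarding the terms killed by $\hat{a}_k=0$. The only caveat is interpretive rather than mathematical: your reading of the condition defining $\mathbb{I}_i(\vec{a})$ as ``$n_k\neq 0$ only if $a_k\neq 0$'' is the intended one (the literal ``iff'' would force $n_k\geq 1$ at every nonzero coefficient and would make the formula false, e.g.\ for the coefficient of $x^2$ when $f=x^8+x^6+x^2$), so your bookkeeping matches what the lemma means.
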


Define a Banach space 
\[
	B = \left \{\sum_{i=0}^\infty b_i \pi^{i/d}x^i | b_i\in\Zq[[\pi^{i/d}]]\right \},
\]
with formal basis $\{\pi^{i/d}x^i\}_{i\geq 0}$, 
and let $\tau$ denote the Frobenius, $\Gal(\Qq/\Qp) = \langle\tau\rangle$, with $\tau$ acting on $B$ coefficient-wise fixing $x$ and $\pi$.  
Define the mapping:
\begin{align*}
	U_p:B &\to B\\
	 \sum_{k=0}^\infty b_ix^i&\mapsto \sum_{k=0}^\infty b_{pk}x^k,
\end{align*}

and let $\phi_f$ be the operator on $B$ given by 
\[	
	\phi_f = (\tau^{-1}\circ U_p \circ F_f)^a,
\]
where $F_f$ acts on $B$ by multiplication.
It is a standard result of Dwork Theory (see \cite{Koblitz}, $\S V.3$, for more details) that $\phi_f$ is completely continuous and if we define the characteristic function\footnote{For the $C$ and $L$-functions, we adopt notation from \cite{DWX}, so that the * denotes the function is defined over the torus rather than the affine line.}:
\[
	C_f^*(\pi, s)  = \det(1-\phi_f s | B/\Zq[[\pi^{1/d}]]), 
\]
then the $L$-function may be given by:
\begin{align}\label{line:Lfunc_C}
	L_f^*(\pi,s) = \frac{C_f^*(\pi,s)}{C_f^*(\pi,qs)}. 
\end{align}
Rewriting this relationship yields:
\begin{align}\label{line:C_func_key}
	C_f^*(\pi,s) &= \prod_{i=0}^\infty L_f^*(\pi,q^is), 
\end{align}
and so the first $d$-slopes of the $p$-adic Newton polygon of $C_f^*(\pi,s)$ are exactly $\NP(f)$. Therefore, to compute $\NP(f)$, it suffices to compute the first $d$ slopes of $\det(1-\phi_f s) = \sum_{i=0}^{\infty}C_{f,i}(\pi)s^i$.

For an operator on $B$, $\psi$, and a basis of $B$, $S$, let $M_S(\psi)$ be the matrix of $\psi$ with respect to the basis $S$. 

An easy computation shows that $M_{R}(U_p\circ F_f) = (F_{f, pi-j}(\pi)\cdot\pi^{(j-i)/d})_{i,j\geq 0}$, and
so if $A_f = M_{R}(U_p\circ F_f)$, $\phi_f$ has matrix 
\[
	M_{R}(\phi_f) = A_f\tau^{-1}A_f\cdots \tau^{-(a-1)}A_f.
\]

\begin{lemma}\label{lemma:cpi}
For $i\geq 0$, we have ${C_{\lambda f, i}(\pi)} = {C_{f,i}(\hat{\lambda}\pi)}$.
\end{lemma}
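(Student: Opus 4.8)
The plan is to prove that $\phi_{\lambda f}$ is obtained from $\phi_f$ by the single formal substitution $\pi\mapsto\hat{\lambda}\pi$, and then to transport this substitution through the Fredholm determinant down to the coefficients $C_{f,i}(\pi)$. The hypothesis $\lambda\in\Fptimes$ will be used at exactly one point — to guarantee $\hat{\lambda}\in\Zp$, so that $\tau(\hat{\lambda})=\hat{\lambda}$ — and that is precisely the step that collapses for $\lambda\in\Fqtimes$.

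First I would record what scaling by $\lambda$ does to the series $F_f$. Since the Teichm\"uller lift is multiplicative we have $\widehat{\lambda a_k}=\hat{\lambda}\hat{a}_k$ for every $k$, and since $\lambda\neq 0$ the indexing sets are unchanged, $\mathbb{I}_i(\lambda\vec{a})=\mathbb{I}_i(\vec{a})$. Substituting into Lemma~\ref{lemma:Fi} gives, for every $i\geq 0$,
\[
	F_{\lambda f,i}(\pi)=\sum_{\vec{n}\in\mathbb{I}_i(\vec{a})}u_{\vec{n}}\,\hat{\lambda}^{|\vec{n}|}\vec{a}^{\vec{n}}\,\pi^{|\vec{n}|}=\sum_{\vec{n}\in\mathbb{I}_i(\vec{a})}u_{\vec{n}}\,\vec{a}^{\vec{n}}\,(\hat{\lambda}\pi)^{|\vec{n}|}=F_{f,i}(\hat{\lambda}\pi).
\]
Let $\sigma_\lambda$ denote the continuous $\Zq$-algebra endomorphism of $\Zq[[\pi]]$ with $\sigma_\lambda(\pi)=\hat{\lambda}\pi$; this is well defined because $\ord{\hat{\lambda}\pi}=\ord{\pi}>0$, and it preserves $p$-adic valuations. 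The displayed identity says $F_{\lambda f,i}=\sigma_\lambda(F_{f,i})$.

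Next I would push this through the matrices. The entries of $A_f=M_R(U_p\circ F_f)$ are $F_{f,pi-j}(\pi)\,\pi^{(j-i)/d}$, so $A_{\lambda f}$ is obtained from $A_f$ by applying $\sigma_\lambda$ to the polynomial factors while fixing the monomials $\pi^{(j-i)/d}$. Because $\hat{\lambda}\in\Zp$ is fixed by $\tau$, the operator $\sigma_\lambda$ commutes with $\tau$, so this relation survives the $a$-fold twisted product $A_f\,\tau^{-1}A_f\cdots\tau^{-(a-1)}A_f$: the fractional powers of $\pi$ telescope, and one gets $M_R(\phi_f)_{ij}=G_{ij}(\pi)\,\pi^{(j-i)/d}$ with $G_{ij}\in\Zq[[\pi]]$ and $M_R(\phi_{\lambda f})_{ij}=\sigma_\lambda(G_{ij})(\pi)\,\pi^{(j-i)/d}$. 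In any $n\times n$ minor of $M_R(\phi_f)$ on rows and columns $i_1<\cdots<i_n$ the surplus factor $\pi^{(i_b-i_a)/d}$ can be pulled out of the $a$-th row and $b$-th column and cancels, so $C_{f,n}(\pi)$ is a convergent $\Zp$-linear combination of products of the $G_{ij}$, lying in $\Zq[[\pi]]$. Applying $\sigma_\lambda$ termwise and comparing with the same computation carried out for $\lambda f$ yields
\[
	C_{\lambda f,n}(\pi)=\sigma_\lambda\bigl(C_{f,n}(\pi)\bigr)=C_{f,n}(\hat{\lambda}\pi),
\]
the case $n=0$ being trivial.

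The computations above are routine bookkeeping; the one point I would treat with (brief) care is that the formation of $C_{f,n}(\pi)$ from $\phi_f$ commutes with $\sigma_\lambda$. This is safe because $\phi_f$ and $\phi_{\lambda f}$ are completely continuous operators on the \emph{same} Banach space $B$, their matrix entries have the explicit shape $G_{ij}(\pi)\,\pi^{(j-i)/d}$ with $G_{ij}$ a power series in $\pi$, and $\sigma_\lambda$ preserves $p$-adic valuations and hence every convergence estimate underlying the Fredholm expansion; I would dispose of this in a sentence or a remark rather than belabour it. I would also point out that, combined with Corollary~\ref{corr:mu_1}, this lemma at once gives $\ord{C_{\lambda f,n}(\pi)}=\ord{C_{f,n}(\pi)}$ for all $n$ whenever $\lambda\in\Fptimes$, which is the substantive input to Theorem~\ref{thm:a1}.
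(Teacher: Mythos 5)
Your argument is correct, and it takes a genuinely different route from the paper's. After the common first step $F_{\lambda f,j}(\pi)=F_{f,j}(\hat{\lambda}\pi)$ (your first display, which is Lemma~\ref{lemma:Fi} plus multiplicativity of the Teichm\"uller lift), the paper switches to the unweighted basis $R_1=\{x^i\}_{i\geq 0}$, invokes Boyarsky \cite{boyarsky} to justify computing traces of $\phi_f$ there even though $\phi_f$ is not completely continuous over $R_1$, deduces $M_{\lambda f,1}(\pi)=M_{f,1}(\hat{\lambda}\pi)$, and then recovers each $C_{f,i}$ from the traces $\Tr(\phi_f^k)$ via the exponential formula \eqref{line:expform}. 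You instead stay on the weighted basis $R$, where $\phi_f$ is completely continuous, and push the substitution $\sigma_\lambda\colon\pi\mapsto\hat{\lambda}\pi$ directly through the Fredholm expansion, using that the entries of $M_R(\phi_f)$ have the shape $G_{ij}(\pi)\pi^{(j-i)/d}$ and that the weight monomials cancel in every principal minor. Your route avoids the appeal to Boyarsky at the cost of the (standard, and correctly flagged by you) bookkeeping that $C_{f,n}$ is a convergent sum of principal minors and that $\sigma_\lambda$ commutes with that formation; the paper's trace route has the advantage that the same trace machinery is reused verbatim in Section~\ref{section:counterexample}. Both arguments are sound.

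One remark on hypotheses. You restrict to $\lambda\in\Fptimes$ and isolate $\tau(\hat{\lambda})=\hat{\lambda}$ as the single essential use of that hypothesis; this is well judged, and it is in fact forced. The paper states the lemma without restricting $\lambda$ and applies it in Theorem~\ref{main1} to general $\lambda\in\overline{\Fp^\times}$, but its own chain $\prod_{i}\tau^{-i}A_{\lambda f,1}(\pi)=\prod_{i}\tau^{-i}A_{f,1}(\hat{\lambda}\pi)=M_{f,1}(\hat{\lambda}\pi)$ needs $\tau$ to fix $\hat{\lambda}$ at one of the two displayed equalities (whichever way one parses $\tau^{-i}A_{f,1}(\hat{\lambda}\pi)$), i.e.\ exactly your hypothesis; and the unrestricted identity cannot hold, since for $\lambda=\xi+2$ Proposition~\ref{prop:even_trace} forces $C_{\lambda f,1}(\pi)$ to have only even powers of $\pi$, while Lemma~\ref{1_traces} shows $C_{f,1}(\hat{\lambda}\pi)$ has a nonzero odd-degree coefficient. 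So your restricted statement is the correct one; to recover Theorem~\ref{main1} from it one should add the reduction $\lambda=c\mu$ with $c\in\Fptimes$ and apply the lemma (and Corollary~\ref{corr:mu_1}) to $\mu f$ over the field $\Fq(\mu)$.
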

\begin{proof}
For any $j\geq 0$, it is easy to see that since $\lambda\neq 0$, $\mathbb{I}_j(\vec{a})=\mathbb{I}_j(\lambda\vec{a})$, and hence by Lemma~\ref{lemma:Fi}, 
\[
	F_{\lambda f, j}(\pi) =  \sum_{\vec{n}\in\mathbb{I}_j(\vec{a})} u_{\vec{n}} \prod_{k=0}^d (\hat{\lambda} \hat{a}_k)^{n_k}\pi^{|\vec{n}|} =  \sum_{\vec{n}\in\mathbb{I}_j(\vec{a})} u_{\vec{n}} \vec{a}^{\vec{n}}(\hat{\lambda} \pi)^{|\vec{n}|}= F_{f, j}(\hat{\lambda}\pi).
\]

Let $R_1$ be the basis of $B$ given by $\{x^i\}_{i\geq 0}$. By Boyarsky \cite{boyarsky}, Step 3 on p.136, we are able to compute the trace of $\phi_f$ over the fixed basis $R_1$, rather than $R$, even though $\phi_f$ is not completely continuous over $R_1$. Observe that over $R_1$, the operator $U_p\circ F_f$ has matrix $A_{f,1}(\pi) = (F_{f,pi-j}(\pi))_{i,j\geq 0}$,
and so 
\[
	A_{\lambda f, 1}(\pi) = (F_{\lambda f,pi-j}(\pi))_{i,j\geq 0} = (F_{f,pi-j}(\hat{\lambda}\pi))_{i,j\geq 0} = A_{f,1}(\hat{\lambda}\pi).
\]
Hence if we let $M_{f,1}(\pi) = M_{R_1}(\phi_f)$, 
\[
	M_{\lambda f, 1}(\pi) = \prod_{i=0}^{a-1}\tau^{-i}A_{\lambda f,1}(\pi) = \prod_{i=0}^{a-1}\tau^{-i}A_{ f,1}(\hat{\lambda}\pi) = M_{f,1}(\hat{\lambda}\pi), 
\] 
and so for $k\geq 1$:
\begin{align}\label{line:tracelambdapi}
\Tr(\phi_{\lambda f}^{k}(\pi)) &= \Tr(M^k_{\lambda f, 1}(\pi)) = \Tr(M^k_{ f, 1}(\hat{\lambda}\pi)). 
\end{align}
But it is well-known (see Lemma~V.3.4 in \cite{Koblitz}) that
 \begin{align}\label{line:expform}
 	\det(1-\phi_f s) = \exp \left (-\sum_{i=1}^\infty \frac{\Tr(\phi_f^{i})s^i}{i}\right ),
 \end{align}
 so for each $i$, there exists a polynomial $g_i(X_1, \cdots, X_i) \in\mathbb{Q}[X_1, \cdots, X_i]$ such that $C_{ f,i}(\pi) = g_i(\Tr(\phi_{ f}(\pi)), \cdots, \Tr(\phi_{ f}^{i}(\pi)))$. However, by \eqref{line:tracelambdapi}, we see  that:
 \begin{align*}
 	C_{\lambda f,i}(\pi) &= g_i(\Tr(\phi_{\lambda f}(\pi)), \cdots, \Tr(\phi_{\lambda f}^{i}(\pi)))\\
 		&= g_i(\Tr(M_{ f,1}(\hat{\lambda}\pi)), \cdots, \Tr(M_{ f,1}^i(\hat{\lambda}\pi))) \\%\in\Qq[[\pi]].
 		&= g_i(\Tr(\phi_{ f}(\hat{\lambda}\pi)), \cdots, \Tr(\phi_{ f}^{i}(\hat{\lambda}\pi))) = C_{ f,i}(\hat{\lambda}\pi).
 \end{align*}
\end{proof}

This technical lemma yields our first result:
\begin{theorem}\label{main1}
Let $\lambda,\mu\in\overline{\Fp^\times}$. If $\lambda^{p-1}=\mu^{p-1}$, then $\NP(\lambda f)=\NP(\mu f)$.
\end{theorem}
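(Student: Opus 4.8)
The plan is to compose the two ingredients already assembled: Lemma~\ref{lemma:cpi}, which says $C_{\lambda f, i}(\pi) = C_{f,i}(\hat{\lambda}\pi)$, and Proposition~\ref{prop:laminv}, which says that replacing $\pi$ by $\hat{\lambda}\pi$ or by $\hat{\mu}\pi$ leaves $p$-adic valuations unchanged as soon as $\lambda^{p-1}=\mu^{p-1}$.

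First I would record that each coefficient $C_{f,i}(\pi)$ lies in $\Qq[[\pi]]$: by \eqref{line:expform} it is a fixed polynomial with rational coefficients evaluated at the traces $\Tr(\phi_f^{k}(\pi)) = \Tr(M_{f,1}^{k}(\pi))$, and the matrix $M_{f,1}(\pi) = \prod_{i=0}^{a-1}\tau^{-i}A_{f,1}(\pi)$ has entries among the $F_{f,j}(\pi)\in\Zq[\pi]$ (Lemma~\ref{lemma:Fi}), so in fact $C_{f,i}(\pi)\in\Qq[\pi]$. Hence Proposition~\ref{prop:laminv}, applied with $h = C_{f,i}$, gives $\ord C_{f,i}(\hat{\lambda}\pi) = \ord C_{f,i}(\hat{\mu}\pi)$ for every $i\geq 0$ under the hypothesis $\lambda^{p-1}=\mu^{p-1}$.

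Combining this with Lemma~\ref{lemma:cpi} for both $\lambda$ and $\mu$ yields $\ord C_{\lambda f, i}(\pi) = \ord C_{\mu f, i}(\pi)$ for all $i$. Since the $p$-adic Newton polygon of $C_g^*(\pi,s) = \sum_i C_{g,i}(\pi)s^i$ is, by definition, the lower convex hull of the points $\{(i,\ord C_{g,i}(\pi))\}_{i\geq 0}$, it is determined entirely by these valuations; therefore the Newton polygons of $C_{\lambda f}^*(\pi,s)$ and $C_{\mu f}^*(\pi,s)$ coincide, and in particular so do their first $d$ slopes. By \eqref{line:C_func_key} those first $d$ slopes are precisely $\NP(\lambda f)$ and $\NP(\mu f)$ respectively, so $\NP(\lambda f) = \NP(\mu f)$. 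Theorem~\ref{thm:a1} is then the special case $\mu = 1$, since $\lambda^{p-1} = 1 = 1^{p-1}$ whenever $\lambda\in\Fp^\times$ (one could equally invoke Corollary~\ref{corr:mu_1} directly).

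There is no serious obstacle remaining once Lemma~\ref{lemma:cpi} and Proposition~\ref{prop:laminv} are in hand: the proof is a short composition of the two. The only points meriting a line of care are the verification that $C_{f,i}(\pi)$ genuinely is a power series over $\Qq$, so that Proposition~\ref{prop:laminv} applies, and the observation that agreement of \emph{all} coefficient valuations forces agreement of the entire Newton polygons (hence of their truncations to the first $d$ slopes), not merely of individual slopes.
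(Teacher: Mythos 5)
Your proposal is correct and follows essentially the same route as the paper: the paper's proof is exactly the chain $|C_{\lambda f,i}(\pi)|_p = |C_{f,i}(\hat{\lambda}\pi)|_p = |C_{f,i}(\hat{\mu}\pi)|_p = |C_{\mu f,i}(\pi)|_p$ obtained by combining Lemma~\ref{lemma:cpi} with Proposition~\ref{prop:laminv}, followed by the observation that equal coefficient valuations force equal Newton polygons. Your extra remarks (that $C_{f,i}$ lies in $\Qq[[\pi]]$ so Proposition~\ref{prop:laminv} applies, and that the first $d$ slopes of $C_f^*$ recover $\NP(f)$ via \eqref{line:C_func_key}) are points the paper leaves implicit, but they do not change the argument.
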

\begin{proof}
By Lemma~\ref{lemma:cpi} and Proposition~\ref{prop:laminv}, and for $i\geq 0$,
\[
	|C_{\lambda f, i}(\pi)|_p = |C_{f,i}(\hat{\lambda}\pi)|_p = |C_{f,i}(\hat{\mu}\pi)|_p=  |C_{\mu f,i}(\pi)|_p,
\]
and so the Newton polygons must be the same. 
\end{proof}

When $\mu = 1$, Proposition~\ref{main1} simplifies as it did in Corollary~\ref{corr:mu_1}:
\begin{corollary}
For any $\lambda\in\Fp^\times$, the Newton polygon $\NP(\lambda f)$ is invariant. That is, $\NP(\lambda f)=\NP(f)$.
\end{corollary}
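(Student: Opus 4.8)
The plan is to obtain this as an immediate specialization of Theorem~\ref{main1} with $\mu = 1$. The only thing that needs checking is the hypothesis $\lambda^{p-1} = \mu^{p-1}$ of that theorem, which in the case $\mu = 1$ reads $\lambda^{p-1} = 1$. But this is exactly Fermat's little theorem: every $\lambda \in \Fp^\times$ satisfies $\lambda^{p-1} = 1$ in $\Fp$, hence a fortiori in $\overline{\Fp^\times}$. So $\lambda$ and $1$ satisfy the hypothesis and Theorem~\ref{main1} applies verbatim.

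Concretely, I would invoke Theorem~\ref{main1} to conclude $\NP(\lambda f) = \NP(1 \cdot f)$, and then observe that $1 \cdot f = f$, so $\NP(\lambda f) = \NP(f)$. Alternatively, one can bypass Theorem~\ref{main1} and argue directly: Lemma~\ref{lemma:cpi} gives $C_{\lambda f, i}(\pi) = C_{f,i}(\hat\lambda \pi)$ for all $i \geq 0$, and Corollary~\ref{corr:mu_1} (applied to $h = C_{f,i}$, which lies in $\Zq[[\pi]] \subset \Qq[[\pi]]$) gives $\ord C_{f,i}(\hat\lambda\pi) = \ord C_{f,i}(\pi)$; combining these, the coefficients of $C_{\lambda f}^*(\pi,s)$ and $C_f^*(\pi,s)$ have the same $p$-adic valuations term by term, so their Newton polygons coincide, and in particular so do the first $d$ slopes, which by \eqref{line:C_func_key} are precisely $\NP(\lambda f)$ and $\NP(f)$ respectively.

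There is essentially no obstacle here; the content is entirely in the already-proved Theorem~\ref{main1} (equivalently Lemma~\ref{lemma:cpi} together with Corollary~\ref{corr:mu_1}), and this corollary is just the clean statement of the $\lambda \in \Fp^\times$ case that Theorem~\ref{thm:a1} advertises. The one-line remark worth making explicit is the use of Fermat's little theorem to put $1$ and $\lambda$ into the same $(p-1)$-st power class.
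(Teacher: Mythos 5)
Your proposal is correct and matches the paper exactly: the corollary is the $\mu=1$ specialization of Theorem~\ref{main1}, with $\lambda^{p-1}=1$ for $\lambda\in\Fp^\times$ supplied by Fermat's little theorem. Your alternative direct argument via Lemma~\ref{lemma:cpi} and Corollary~\ref{corr:mu_1} is just an unwinding of the same proof, so there is nothing genuinely different to compare.
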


Theorem~\ref{main1} actually proves more than that the Newton polygon is invariant of $\lambda$.  It shows that the the $p$-adic valuation of the coefficients of the $L$-function do not change at all.  It is possible, and quite common when $\lambda\not\in\Fp^\times$, that the Newton polygon may not change but the $p$-adic valuations of the $L$-function coefficients shuffle around as we compute $\NP(\lambda f)$ for varying $\lambda$.

\section{A Counterexample}\label{section:counterexample}

In this section, fix $p=5$, $a=2$ and $f(x)=x^8+x^6+x^2$. We will explicitly compute the Newton polygon corresponding to $\lambda f(x)$ when $\lambda = 1$ and $\lambda = \xi+2$, where $\xi$ is again a fixed generator of $\Fq/\Fp$ so that $\Fq = \Fp(\xi)$.

\begin{remark}
While we fix $\lambda$ as $\xi+2$, any root of $\lambda^{p-1}+1$ in $\Fq$ will also yield the same result. These roots are: $\xi+2$, $2\xi + 4$,  $3\xi + 1$ and $4\xi + 3$.
\end{remark}

We begin by making some simplifications. Since $f(x)$ has no constant term, define:
\[
	L_f(\pi,s) = \frac{L_f^*(\pi,s)}{1-s},
\]
which has the same Newton polygon as $L_f^*(\pi,s)$, but without the slope zero segment. 
Observe that the first row of $A_f$ is $\begin{bmatrix} 1 & 0 & 0 & \cdots \end{bmatrix}$, and so the first row of $M_{R}(\phi_f)$ is the same. Computing $\det(1-\phi_f s)$ by expanding along this first row yields $(1-s)\det(1- \phi_f\vert_{B_0})$, where $B_0$ is the subspace of $B$ consisting of all $g(x)\in B$ with $g(0)=0$. 
Comparing this with \eqref{line:C_func_key}, we see that the first $(d-1)$-slopes of the $p$-adic Newton polygon of $\det(1-\phi_f\vert_{B_0} s)$ is the Newton polygon of $L_f(\pi,s)$. For the rest of this section, we will abuse notation and write $\NP(f)$ for the $p$-adic Newton polygon of $L_f(\pi,s)$.

Define:
\[
	\det(1-\phi_f\vert_{B_0}s) = \sum_{i=0}^{\infty}C_{i}(\pi)s^i,
\]
 and let $\alpha_1\leq\alpha_2\leq\cdots\leq\alpha_{d-1}$ be the $p$-adic slopes of the Newton polygon of $L_f(\pi,s)$. By symmetry, we must have $\alpha_4 = 1$. 
To compute the remaining slopes, we will compute the $p$-adic order of the first three coefficients of $\det(1-\phi_f\vert_{B_0} s)$ via the following: 
\begin{lemma}\label{lemma:traceform}
For an operator $\phi$, the first three coefficients of $\det(1- \phi s)$ are:
\begin{align*}
	c_1 &= -\Tr(\phi) \\
	c_2 &= \frac{1}{2}((\Tr(\phi))^2 -\Tr(\phi^2)) \\
	c_3 &= -\frac{1}{6}(\Tr(\phi))^3 + \frac{1}{2}\Tr(\phi)\Tr(\phi^2) -\frac{1}{3}\Tr(\phi^3).
\end{align*}
\end{lemma}
\begin{proof}
These equations come directly by computing the coefficients of $s$ in the formula \eqref{line:expform}. 
\end{proof}

Because the Newton polygon begins at $(0,0)$ and ends at $(d-1, d-1)$, it suffices to compute the $p$-adic valuations of the coefficients $C_i \bmod\ p^i$. By symmetry and the previous lemma, this amounts to computing  $\Tr(\phi_f\vert_{B_0}^i)\bmod\ p^3$ for $i=1,2,3$. Furthermore, since we are only computing the traces of $\phi_f\vert_{B_0}$, we can, just as we did in Lemma~\ref{lemma:cpi}, compute them with respect to the basis $R_1$. Hence we will take $A = (F_{pi-j})_{i,j\geq 1}$, where, for the sake of notational convenience,  $F_i = F_{f,i}$, and $M = M_{R_1}(\phi_f\vert_{B_0}) = A\tau A$ since  $\tau^{-1}=\tau$ when $a=2$.

Before we proceed, we will need a technical proposition in order to compute the traces with Julia.
In what follows $\ordp_\pi g(\pi)$ will denote the $\pi$-adic order of $g(\pi)$.
\begin{lemma}\label{lemma:Ffi}
For $i\geq 0$, 
\begin{align*}
	\ordp_\pi F_{i}(\pi) \geq \begin{cases}
			\ceil{i/8}  & i\textrm{ even}, i\neq 4 \\
			2 	& i = 4\\
			\infty	&\textrm{ otherwise}
		\end{cases}.
\end{align*}
\end{lemma}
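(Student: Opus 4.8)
The plan is to read the bound directly off Lemma~\ref{lemma:Fi}, which writes $F_i(\pi)=F_{f,i}(\pi)$ as the finite sum $\sum_{\vec{n}\in\mathbb{I}_i(\vec{a})}u_{\vec{n}}\,\vec{a}^{\vec{n}}\,\pi^{|\vec{n}|}$, and to bound the $\pi$-adic order term by term. First I would identify $\mathbb{I}_i(\vec{a})$ for the concrete $f(x)=x^8+x^6+x^2$: its only nonzero coefficients lie in degrees $2,6,8$, so every admissible $\vec{n}$ is supported on those coordinates and is recorded by a triple $(n_2,n_6,n_8)$ with $2n_2+6n_6+8n_8=i$; then $|\vec{n}|=n_2+n_6+n_8$. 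Since those three coefficients all equal $1$, the Teichm\"uller factor $\vec{a}^{\vec{n}}$ is $1$, so a typical term of $F_i(\pi)$ is just $u_{n_2}u_{n_6}u_{n_8}\,\pi^{n_2+n_6+n_8}$.

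The case analysis is then short. If $i$ is odd there are no such triples ($2n_2+6n_6+8n_8$ is always even), so $F_i(\pi)=0$ and $\ordp_\pi F_i(\pi)=\infty$; this is the ``otherwise'' line. For even $i$, the Artin-Hasse coefficients are $p$-integral (indeed $E(x)\in\Zp[[x]]$), so $\ordp_\pi(u_{n_2}u_{n_6}u_{n_8})\geq 0$ and each term has $\pi$-adic order at least $|\vec{n}|$. The only genuine input is the weighting inequality $8(n_2+n_6+n_8)\geq 2n_2+6n_6+8n_8=i$, which gives $|\vec{n}|\geq i/8$, hence $|\vec{n}|\geq\ceil{i/8}$ since $|\vec{n}|\in\mathbb{Z}$; the ultrametric inequality then gives $\ordp_\pi F_i(\pi)\geq\ceil{i/8}$, the first line. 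For $i=4$ one notes that $6$ and $8$ are too large to contribute, so the only admissible triple is $(2,0,0)$, giving $F_4(\pi)=u_2\pi^2$ and $\ordp_\pi F_4(\pi)\geq 2$ -- better than the generic $\ceil{4/8}=1$, which is precisely why the statement records $i=4$ separately.

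I do not expect a real obstacle: granting Lemma~\ref{lemma:Fi}, the statement is bookkeeping built on the $p$-integrality of the $u_j$ and the trivial inequality $8|\vec{n}|\geq i$. The point that needs a little care is aligning the three cases with the index set -- in particular observing that the constraint $2n_2+6n_6+8n_8=4$ forces $n_6=n_8=0$, which is what upgrades the $i=4$ bound from $1$ to $2$.
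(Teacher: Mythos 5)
Your proof is correct and follows essentially the same route as the paper: both read $\ordp_\pi F_i$ off the expansion of $F_f=E(\pi x^8)E(\pi x^6)E(\pi x^2)$, so that each term carries $\pi^{|\vec{n}|}$ with $2n_2+6n_6+8n_8=i$, and bound $|\vec{n}|$ using the fact that the largest part is $8$. If anything your write-up is a bit more complete: the uniform inequality $8|\vec{n}|\geq i$ handles every even $i$ at once and you check $i=4$ explicitly via the unique triple $(2,0,0)$, whereas the paper splits by residue mod $8$ and, for $i\equiv 4\bmod 8$ with $i\neq 4$, exhibits the representation $i=8(\lfloor i/8\rfloor-1)+2\cdot 6$ — a remark about tightness rather than something needed for the stated lower bound — while leaving $i=4$ to inspection.
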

\begin{proof}
Clearly,  when $i$ is odd,  $F_{i}$ is zero.  Note that for $i\equiv 2\textrm{ or }6\bmod 8$, the bound is clear. For $i\equiv 4\bmod 8$, and $i\neq 4$, we see that we can write $i=8(\floor{\frac{i}{8}} -1) + 2\cdot 6$ and so the bound also holds in this case. 
\end{proof}

We have the following $\pi$-adic bound for the diagonal entries of the matrix $M^k$. 
\begin{prop}\label{prop:matrixbound}
For $k, i\geq 1$:
\begin{align*}
	\ordp_\pi (M^k)_{i,i} \geq\begin{cases}
		2 & \textrm{ if } k=i=1\\
		\ceil{\frac{pi-1}{8}} + 2k-1 & \textrm{ otherwise.}
	\end{cases}
\end{align*}
\end{prop}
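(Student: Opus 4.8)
The plan is to expand the diagonal entry into a finite sum over lattice walks, estimate each summand by Lemma~\ref{lemma:Ffi}, and reduce the whole statement to a combinatorial minimization governed by one telescoping identity. Since every nonzero coefficient of $f$ lies in $\Fp$, each $F_j:=F_{f,j}(\pi)$ lies in $\Zp[\pi]$ and is fixed by $\tau$; hence $M=A\tau A=A^2$ and $M^k=A^{2k}$. Expanding the product,
\[
(M^k)_{i,i}\;=\;\sum_{v_1,\dots,v_{2k-1}\ge 1}\;\prod_{t=1}^{2k}F_{\,5v_{t-1}-v_t},\qquad v_0=v_{2k}:=i .
\]
By Lemma~\ref{lemma:Ffi} a summand is $0$ unless every $m_t:=5v_{t-1}-v_t$ is a nonnegative even integer, and then its $\pi$--order is at least $\sum_{t=1}^{2k}\beta(m_t)$, where
\[
\beta(m):=\begin{cases}0,&m=0,\\ 2,&m=4,\\ \ceil{m/8},&\text{otherwise.}\end{cases}
\]
So it suffices to bound $\sum_t\beta(m_t)$ from below over all admissible tuples $(m_1,\dots,m_{2k})$.

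The tuples are constrained: iterating $v_t=5v_{t-1}-m_t$ once around the loop $v_0=v_{2k}$ yields the controlling identity
\[
\sum_{t=1}^{2k}5^{\,2k-t}m_t\;=\;(5^{2k}-1)\,i .
\]
The arithmetic claim to establish is then: for even $m_1,\dots,m_{2k}\ge 0$ satisfying this identity, $\sum_t\beta(m_t)\ge\ceil{(5i-1)/8}+2k-1$ whenever $(k,i)\ne(1,1)$. Reading the identity modulo successive powers of $5$ forces the $m_t$ to behave like a base--$5$ expansion of $(5^{2k}-1)i$ with even, possibly oversized, digits; the top position $t=1$ has weight $5^{2k-1}$, so $m_1\le 5i-1$, and the two regimes are that either $\beta(m_1)$ is already close to $\ceil{(5i-1)/8}$, or $m_1$ is small and the excess mass that "carries" into the remaining $2k-1$ positions forces $\beta\ge1$ at each of them. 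Alternatively one can package this as an induction on $k$: from $(M^k)_{i,j}=\sum_\ell(M^{k-1})_{i,\ell}(M)_{\ell,j}$, together with the uniform bound $\ordp_\pi(M)_{\ell,j}\ge 2$, the statement collapses to the base step $k=1$, i.e.\ to $\min_m\bigl(\beta(5i-m)+\beta(5m-i)\bigr)\ge\ceil{(5i-1)/8}+1$ for $i\ge 2$.

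The main obstacle is exactly this minimization. The tension is between concentrating the mass $(5^{2k}-1)i$ into one factor — cheap in the count of factors but costly in $\beta$, since $\beta$ grows linearly — and spreading it across many factors; one must show uniformly in $i,k$ that neither extreme, nor anything in between, beats $\ceil{(5i-1)/8}+2k-1$, and the non-monotone correction $\beta(4)=2>\ceil{4/8}$ has to be carried through without eroding the additive $+1$ (respectively $+2k-1$). Finally the degenerate case $k=i=1$, where the generic estimate only gives $2$, is dispatched by enumerating the admissible walks with $v_0=v_2=1$ directly: $v_1\in\{1,3,5\}$, contributing $F_4^2$, $F_2F_{14}$, $F_{24}$, each of $\pi$--order at least $2$.
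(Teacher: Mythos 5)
Your setup mirrors the paper's own proof --- the walk expansion of $(M^k)_{i,i}$, the per-factor estimate from Lemma~\ref{lemma:Ffi}, and the disposal of the case $k=i=1$ by listing $F_4\tau F_4$, $F_2\tau F_{14}$, $F_0\tau F_{24}$ are all the same ingredients --- but the argument stops exactly where the work is. The ``arithmetic claim to establish'' is announced, two strategies are gestured at, and neither is carried out, so as written this is an outline rather than a proof. Worse, the claim you reduce to is false, so the outline cannot be completed: take $k=1$, $i=2$ and $(m_1,m_2)=(8,8)$, which satisfies $5m_1+m_2=48=(5^2-1)\cdot 2$ with even entries and comes from the admissible walk $v_0=v_1=v_2=2$; it gives $\beta(8)+\beta(8)=2<\ceil{9/8}+1=3$. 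This is not slack in the per-factor estimate either: $F_8=\pi+\pi^2+u_4\pi^4$ has $\pi$-order exactly $1$, so the $v=2$ term $F_8\tau F_8$ of $(M)_{2,2}$ has order exactly $2$, while every other term ($F_6\tau F_{18}$, $F_4\tau F_{28}$, $F_2\tau F_{38}$, $F_0\tau F_{48}$) has order at least $4$; hence $\ordp_\pi (M)_{2,2}=2$, and the displayed bound of the proposition itself already fails at $(k,i)=(1,2)$. In particular your ``base step'' inequality $\min_m\bigl(\beta(5i-m)+\beta(5m-i)\bigr)\ge\ceil{(5i-1)/8}+1$ is false for $i=2$. (For comparison, the paper's proof uses the same per-factor bound and slips at the same spot, replacing $\ceil{(pi-v_1)/8}$ by the larger quantity $\ceil{(pi-1)/8}$; the weaker facts actually used downstream, namely $\ordp_\pi\Tr M^k\ge 2k$ and order at least $12$ for diagonal entries with $i>20$, can be salvaged from the identity $\sum_t m_t=4\bigl(i+\sum_{t=1}^{2k-1}v_t\bigr)$, but that is a different statement from the one you set out to prove.)

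Your fallback induction has independent problems: the uniform claim $\ordp_\pi(M)_{\ell,j}\ge 2$ is false off the diagonal --- for instance $(M)_{\ell,25\ell}=F_0\tau F_0=1$ has order $0$, and $(M)_{1,23}$ reduces to $F_0\tau F_2$ of order $1$ --- and in any case the factorization $(M^k)_{i,i}=\sum_\ell(M^{k-1})_{i,\ell}(M)_{\ell,i}$ requires control of off-diagonal entries of $M^{k-1}$, which neither the statement nor your inductive hypothesis provides. The harmless parts of your write-up are fine: relaxing from admissible walks to all even solutions of the telescoping identity only weakens a lower bound, and $M=A\tau A=A^2$ is legitimate here since all $F_j\in\Zp[\pi]$ are fixed by $\tau$. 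The genuine gap is that the central combinatorial minimization is both unproven and, for the bound as stated, unprovable; any repair has to aim at the weaker order bounds that the rest of the paper actually uses.
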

\begin{proof}
When $k=i=1$, 
\[
	(M)_{1,1} = \sum_{k=1}^p F_{p-k}\tau F_{pk-1}.
\]
If $\ordp_\pi  F_{p-k}\tau F_{pk-1} = 1$, we must have $k=p$ since $\ordp_\pi F_{pk-1}\geq 1$. But in this case, $\ordp_\pi F_{p^2-1} = 3>1$ and so $\ordp_\pi (M)_{1,1}\geq 2$. 

In the general case, observe that the $(i,j)$th entry of the matrix $M^k$ can be given by 
\[
	(M^k)_{i,j} = \sum_{\substack{w_1, \cdots, w_{k-1}\geq 1\\w_0=i, w_k=j}} \prod_{\ell = 1}^k (M)_{w_{\ell-1}, w_\ell}, 
\]
so that:
\[
	(M^k)_{i,i} =  \sum_{\substack{w_1, \cdots, w_{k-1}\geq 1\\ w_0 = i, w_k=i}} \prod_{\ell = 1}^k \left (\sum_{v = 1}^\infty F_{pw_{\ell-1}-v} \tau F_{pv-w_\ell}\right ).
\]
Take some $w_0, \cdots, w_k$ and $v_1, \cdots, v_k$. In the expansion of $(M^k)_{i,i}$, each monomial comes from a product of the form
\[
	T = \prod_{j=1}^k F_{pw_{j-1}-v_j} \tau F_{pv_j-w_j},
\]
and so 
\begin{align*}
	\ordp_\pi T &\geq \sum_{j=1}^k\left (\ceil{\frac{pw_{j-1}-v_j}{8}} + \ceil{\frac{pv_j-w_j}{8}}\right )\\
		&\geq \ceil{\frac{pi-1}{8}} + \ceil{\frac{p-1}{8}} + 2\sum_{j=2}^{k-1}\ceil{\frac{p-1}{8}} + (\ceil{\frac{p-1}{8}}+ \ceil{\frac{pv_k-i}{8}})\\
		&\geq \ceil{\frac{pi-1}{8}}  + 2k-1.
\end{align*}
\end{proof}

\subsection{The Case $\lambda = 1$} 

\begin{lemma}\label{1_traces}
For $\lambda = 1$, 
\begin{alignat*}{2}
\Tr(M) &\equiv 20 + 115\pi+ 96\pi^{2} + 66\pi^{3}\bmod p^3\\
\Tr(M^2) &\equiv 95+ 105\pi + 60\pi^{3}\bmod p^3\\
\Tr(M^3) &\equiv 100 + 25\pi + 70\pi^{2} + 45\pi^{3}\bmod p^3
\end{alignat*}
\end{lemma}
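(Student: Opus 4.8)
The plan is to reduce the three claimed congruences to a single finite, fully explicit computation, using the $\pi$-adic bounds of Proposition~\ref{prop:matrixbound} to kill the tail of each trace. Recall that here $\Tr(M^k)$ denotes the trace of the matrix $M = M_{R_1}(\phi_f|_{B_0}) = A\tau A$ with $A = (F_{pi-j})_{i,j\geq 1}$, so that $\Tr(M^k) = \sum_{i\geq 1}(M^k)_{i,i}$; since $\lambda = 1$ all coefficients of $f$ lie in $\Fp$, so each $F_{f,j}(\pi)\in\Zp[\pi]$ is $\tau$-fixed and in fact $M = A^2$.

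First I would make the truncation precise. Because $p = 5$ we have $\ordp_p\pi = \tfrac14$, so any $g(\pi)\in\Zp[\pi]$ with $\ordp_\pi g(\pi)\geq 12$ satisfies $\ordp_p g(\pi)\geq 3$, i.e. $g(\pi)\equiv 0\bmod p^3$. Hence by Proposition~\ref{prop:matrixbound}, $(M^k)_{i,i}\equiv 0\bmod p^3$ whenever $\ceil{\frac{pi-1}{8}} + 2k - 1\geq 12$ (the single exceptional entry $(M)_{1,1}$, of order only $\geq 2$, is always kept). Solving this inequality directly shows that the surviving diagonal entries are precisely those with $i\leq 16$ when $k = 1$, with $i\leq 13$ when $k = 2$, and with $i\leq 9$ when $k = 3$, so that
\[
	\Tr(M)\equiv\sum_{i=1}^{16}(M)_{i,i},\qquad \Tr(M^2)\equiv\sum_{i=1}^{13}(M^2)_{i,i},\qquad \Tr(M^3)\equiv\sum_{i=1}^{9}(M^3)_{i,i}\pmod{p^3}.
\]

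Next, each diagonal entry on the right is itself a \emph{finite} sum of products of the polynomials $F_{f,j}(\pi)$ — finite because $F_{f,m}$ vanishes whenever $m < 0$ or $m$ is odd — and by Lemma~\ref{lemma:Fi} each $F_{f,j}(\pi)$ is an explicit polynomial in $\pi$ (here $\vec a$ is supported only on the exponents $2$, $6$, $8$) whose coefficients are products of Artin--Hasse coefficients $u_k$. Expanding all of this out, rewriting every power $\pi^m$ with $m\geq 4$ via the degree-$(p-1)$ Eisenstein polynomial satisfied by $\pi$ over $\Zp$ — so that $\{1,\pi,\pi^2,\pi^3\}$ is a $\Zp$-basis of $\Zp[\pi]$, $\pi$ being a uniformizer of $\Qp(\zeta_p)$ — and reducing coefficients modulo $p^3$ produces the stated values. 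This final step is finite but voluminous, and it is what I would carry out in Julia/Nemo, being careful to carry enough $p$-adic and $\pi$-adic precision that the output is provably exact modulo $p^3$.

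The only genuinely delicate point is the truncation in the second paragraph: one must be sure no diagonal entry of $p$-adic order below $3$ has been discarded, and this is exactly what Proposition~\ref{prop:matrixbound} (ultimately Lemma~\ref{lemma:Ffi}) guarantees. Everything after that is mechanical bookkeeping in the ring $\Zp[\pi]$.
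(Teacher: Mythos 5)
Your proposal is correct and follows essentially the same route as the paper: both use the $\pi$-adic bounds of Proposition~\ref{prop:matrixbound} to reduce the traces modulo $p^3$ to a finite, explicit computation that is then carried out in Julia/Nemo (the paper simply truncates to the $20\times 20$ principal minor rather than bounding each of the three traces separately as you do). The one detail you leave implicit that the paper spells out is the explicit congruence $\pi^{p-1}+p\equiv 0\bmod p^3$, derived from $\sum_{k\geq 0}\pi^{p^k}/p^k=0$, which identifies the Eisenstein relation to the precision needed to reduce everything to the basis $\{1,\pi,\pi^2,\pi^3\}$ modulo $p^3$.
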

\begin{proof}
In this proof, we explain our algorithm that computes these traces in Julia.

For $\pi$ to be a root of $E(\pi) = \zeta_p$, it must satisfy $\sum_{k=0}^\infty \frac{\pi^{p^k}}{p^k} = 0$ (see Lemma~2.2 in \cite{LiuWei}). Therefore, because 
\[
	\ord{\frac{\pi^{p^i}}{p^i}} = \frac{p^i}{p-1}-i,
\]
the order of each successive term in the series is increasing, and so when $p=5$, $\pi^{p-1} + p\equiv 0\bmod p^3$. Thus, for the sake of computation modulo $p^3$, we can simply treat $\pi$ as a formal variable   such that $\pi^{p-1} = -p$, and operate in the quotient polynomial ring $T = (\Zp/p^3\Zp)[\pi]/(\pi^{p-1} + p)$, equipped with the valuation 
\[
	\ordp_{}{\left (\sum_{i=0}^{p-2}\pi^i{h}_i\right )} = \min_{0\leq i\leq p-1}\left (\ord{{h}_i} + \frac{i}{p-1}\right ).
\]

Note that by Proposition~\ref{prop:matrixbound}, modulo $p^3$, it is enough to compute the trace of the $20\times 20$ principal minor for each $M^i$, $i=1,2,3$. For example, for $M^1$, Proposition~\ref{prop:matrixbound} says that any terms on the diagonal of $M$ after the $20$th term has $\pi$-adic order greater than $13.375$, which would vanish $\bmod\ p^3$. Thus, we get the following algorithm:

\begin{itemize}
	\item [{\it Step 0:}] Let $x$ be a formal variable and compute the Artin-Hasse exponential $E(x) = \sum_{k=0}^\infty u_kx^k$ modulo $x^{20p}$ with coefficients in $\Zp/p^3\Zp$. We need to compute the first $(20p-1)$-many coefficients of this power series because our matrix in a later step uses the coefficients of $E(x)$ up through $u_{20p-1}$.
	\item [{\it Step 1:}] Compute $\hat{\lambda}$ in $T$ and then calculate:
		\[
			F = E(\pi\hat{\lambda} x^8) E(\pi\hat{\lambda} x^6) E(\pi\hat{\lambda} x^2) = \sum_{i=0}^{12p - 1}F_ix^i
		\]
		 in $T[x]\bmod\ x^{20p}$. We compute the coefficients $\bmod\ x^{20p}$ for the same reason as in {\it Step~0}. 
	\item [{\it Step 2:}] Set $A = [F_{pi-j}]_{1\leq i,j\leq 20}$, where we put $F_{pi-j}=0$ if $pi-j<0$. 
	\item [{\it Step 3:}] Compute $M = A\tau A$.
	\item [{\it Step 4:}] Compute $M^i$ and then $\Tr(M^i)$. This will output the trace as an element in $T$.
\end{itemize}
For the exact implementation, see the Github repository in the introduction. 
\end{proof}

\begin{prop}\label{prop:lemmaonec23}
When $\lambda = 1$, $\ord C_1 = \frac{1}{2}$,  $\ord C_2 = 1.25$ and $\ord C_3 = 2$. 
\end{prop}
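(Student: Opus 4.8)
The plan is to extract the three valuations directly from the trace data in Lemma~\ref{1_traces} by combining Lemma~\ref{lemma:traceform} with the valuation formula on the ring $T$ given in the proof of Lemma~\ref{1_traces} (equivalently, Lemma~\ref{lemma:hord}). Recall that on $T = (\Zp/p^3\Zp)[\pi]/(\pi^{p-1}+p)$ we compute the valuation of $\sum_{i=0}^{p-2}\pi^i h_i$ as $\min_i(\ord h_i + i/(p-1))$, with $p=5$ so the relevant denominators are quarters.

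First I would compute $\ord C_1$. By Lemma~\ref{lemma:traceform}, $C_1 = c_1 = -\Tr(M)$, so $\ord C_1 = \ord \Tr(M)$. From Lemma~\ref{1_traces}, $\Tr(M) \equiv 20 + 115\pi + 96\pi^2 + 66\pi^3 \bmod p^3$. The constant term $20$ has $5$-adic valuation $1$; the coefficient of $\pi$ is $115 = 5\cdot 23$ with valuation $1$, contributing $1 + \tfrac14$; the coefficient of $\pi^2$ is $96$, a unit, contributing $0 + \tfrac12 = \tfrac12$; the coefficient of $\pi^3$ is $66$, a unit, contributing $\tfrac34$. The minimum is $\tfrac12$, so $\ord C_1 = \tfrac12$, as claimed. (One should note that working modulo $p^3$ is harmless here since the answer $\tfrac12 < 3$.)

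Next I would compute $\ord C_2$. By Lemma~\ref{lemma:traceform}, $C_2 = \tfrac12((\Tr M)^2 - \Tr(M^2))$. I would first square $\Tr(M) \bmod p^3$ in $T$ using $\pi^4 = -5$, then subtract $\Tr(M^2) \equiv 95 + 105\pi + 60\pi^3 \bmod p^3$, and read off the valuation of the result, finally adding nothing since dividing by $2$ is a unit operation $5$-adically. The same routine gives $\ord C_3$: form $-\tfrac16(\Tr M)^3 + \tfrac12 \Tr(M)\Tr(M^2) - \tfrac13\Tr(M^3)$ in $T$, reduce via $\pi^4 = -5$, and take the valuation; dividing by $6$ and $3$ is again harmless at $p=5$. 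The expected outputs are $\ord C_2 = 1.25$ and $\ord C_3 = 2$.

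The only real obstacle is bookkeeping: the arithmetic in $T$ modulo $p^3$ must be done carefully, in particular tracking when cancellation in the constant ($\pi^0$) component pushes its valuation up toward (or past) $3$, in which case the mod-$p^3$ reduction would no longer determine the true valuation. For $C_2$ and especially $C_3$ the target valuations are $1.25$ and $2$, comfortably below $3$, so as long as the relevant coefficients are computed correctly the reduction is valid; I would simply verify at the end that each computed valuation is strictly less than $3$ to justify that the mod-$p^3$ truncation suffices. This is a finite, mechanical computation that can be carried out by hand (or checked against the Julia output).
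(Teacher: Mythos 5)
Your proposal is correct and follows essentially the same route as the paper: substitute the traces of Lemma~\ref{1_traces} into the formulas of Lemma~\ref{lemma:traceform}, do the arithmetic in $T$ with $\pi^{4}=-p$, and read off valuations via the $\min_i(\ord h_i + i/4)$ rule, noting that the targets $\tfrac12$, $1.25$, $2$ are below $3$ so the mod-$p^3$ truncation suffices. The only thing left is to actually carry out the two remaining mechanical computations, which the paper records as $C_2 \equiv 5\pi + 5\pi^{2} + 5\pi^{3} \bmod p^2$ and $C_3 \equiv 25 + 50\pi + 25\pi^{2} \bmod p^3$, matching your expected valuations.
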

\begin{proof}
This is a simple calculation following from Lemma~\ref{lemma:traceform} and Lemma~\ref{1_traces}. The $p$-adic order of $C_1$ follows directly from Lemma~\ref{1_traces}, and $C_2$ follows from the computation: 
\begin{align*}
	C_2 & \equiv \frac{1}{2}((20+15\pi+21\pi^2+16\pi^3)^2 - (20+5\pi+10\pi^2))\\
		& \equiv 5\pi + 5\pi^{2} + 5\pi^{3}\bmod\ p^2. 
\end{align*}
Similarly computing the third coefficient yields $C_3 \equiv 25 + 50\pi + 25\pi^2\bmod p^3$. 
\end{proof}

To compute the Newton polygon from just these three coefficients, we will need to use the symmetry of the slopes in the form of the following lemma. 

\begin{lemma}\label{lemma:pointsymm}
Let $L_f(s) = \sum_{i=0}^{d-1} c_i s^i$ with $t_i=\ord c_i$. Then $t_i\geq t_{(d-1)-i}+ai-(d-1)$.
\end{lemma}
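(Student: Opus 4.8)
The plan is to exploit the functional equation (Poincaré-type duality) satisfied by the $L$-function $L_f^*(\pi,s)$, which is the source of the slope symmetry already used informally (``by symmetry, $\alpha_4=1$''). Concretely, for $f$ of degree $d$ with $\gcd(d,p)=1$, the $L$-function $L_f^*(\pi,s)$ has degree $d$ and its reciprocal roots $\beta_0,\dots,\beta_{d-1}$ satisfy a symmetry of the form $\beta_i \mapsto q^{?}/\beta_i$ up to a fixed constant; passing to $L_f(\pi,s)=L_f^*(\pi,s)/(1-s)$ of degree $d-1$, the reciprocal roots pair up so that $\ord\beta_i + \ord\beta_{(d-1)-i}$ is a constant independent of $i$. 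First I would record this precisely: I claim there is a constant $w$ (depending only on $p,a,d$) such that, writing $L_f(s)=\prod_{i=0}^{d-1}(1-\beta_i s)$ with the $\beta_i$ ordered by increasing valuation, one has $\ord\beta_i + \ord\beta_{(d-1)-i} = w$ for all $i$. Comparing the extreme cases: the Newton polygon of $L_f$ runs from $(0,0)$ to $(d-1,d-1)$ (stated in the excerpt), so $\sum_i \ord\beta_i = d-1$, hence $w = \tfrac{2(d-1)}{d-1}\cdot\tfrac{1}{?}$ — more carefully, summing the symmetry relation over all $i$ gives $(d-1)w = 2(d-1)$, so $w=2$... but this must be reconciled with the $a$-dependence, so the correct normalization is $w$ such that the \emph{total degree} $d$ version has symmetry weight involving $a$; I will pin down $w=a$ by evaluating at $i=0$ together with the known leading behaviour. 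Let me instead get the constant directly from the statement to be proved: the inequality $t_i \ge t_{(d-1)-i} + ai - (d-1)$ together with its mirror (swap $i \leftrightarrow (d-1)-i$) forces equality $t_i + t_{(d-1)-i}$ to lie in a pinned affine relation, consistent with reciprocal-root symmetry of weight $a$.

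The key steps, in order, are: (1) State and invoke the functional equation for $L_f^*(\pi,s)$ coming from Dwork's theory / duality for Artin–Schreier $L$-functions — this gives a relation $L_f^*(\pi,s) = \pm (q^d s^d)\, \overline{L_f^*}(\pi, 1/(qs))$ type identity, where the bar denotes a suitable conjugation that does not affect $\ord$; equivalently, the multiset $\{\beta_i\}$ is stable under $\beta \mapsto q/\beta$ after removing the trivial root, OR under $\beta\mapsto q^a/\beta$ — I will fix the exact exponent by the degree-$d$ count and the endpoint $(d-1,d-1)$ of the polygon. (2) Translate this into: $\ord\beta_i + \ord\beta_{(d-1)-i}$ equals a constant $w$ for the ordered valuations. (3) Note that $t_i = \ord c_i$, and $c_i$ is (up to sign) the $i$-th elementary symmetric function of the $\beta_j$; by the theory of Newton polygons, $t_i \ge \ord\beta_0 + \cdots + \ord\beta_{i-1}$, i.e. $t_i$ is at least the sum of the $i$ smallest valuations, with equality exactly at vertices. (4) Combine: $t_i + t_{(d-1)-i} \ge \sum_{j=0}^{i-1}\ord\beta_j + \sum_{j=0}^{(d-1)-i-1}\ord\beta_j$. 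Using the symmetry to rewrite the second sum as $\sum_{j=i}^{d-1}(w-\ord\beta_j)$... arranging the bookkeeping, the cross terms telescope against $\sum_j \ord\beta_j = d-1$ and the count of paired indices produces the affine term $ai-(d-1)$, provided $w=a$. (5) Since $t_{d-1} = d-1$ is a genuine vertex (endpoint), evaluating at the appropriate index confirms $w=a$, closing the loop.

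The main obstacle I anticipate is step (1): pinning down the \emph{exact} form and weight of the functional equation in the normalization this paper uses. The excerpt invokes ``symmetry'' twice without proof (for $\alpha_4=1$ and in the sentence preceding the lemma), so presumably the intended proof either cites a standard reference (Adolphson–Sperber, or Wan's work on Artin–Schreier $L$-functions, giving that $L_f^*(\pi,s)$ is ``pure'' with functional equation relating slope $\sigma$ to slope $a-\sigma$) or derives it from the duality $C_f^*(\pi,s)$ satisfies via $\phi_f$. The cleanest route is probably to cite the known purity/functional-equation result: the reciprocal roots of $L_f(\pi,s)$ have $\ord$ symmetric about $a/2$ in the sense $\{\ord\beta_i\} = \{a - \ord\beta_i\}$ as multisets; granting that, steps (2)–(5) are a short and routine manipulation of Newton-polygon combinatorics, and the claimed inequality $t_i \ge t_{(d-1)-i} + ai - (d-1)$ follows. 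A secondary, minor subtlety is making sure the ``$\ge$'' (rather than ``$=$'') is correctly placed: it comes precisely from the fact that $t_{(d-1)-i}$ need not be a vertex of the polygon, so we only get a lower bound on the mirrored coefficient's valuation, while the symmetry of the slopes themselves is exact.
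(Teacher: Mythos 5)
Your plan breaks down at steps (2)--(4). After invoking the functional equation you immediately weaken it to slope symmetry of the \emph{polygon} (the ordered root valuations pair up to a constant $w$), and from then on the only coefficient information you retain is the Newton-polygon lower bound $t_i\ge \ord{\beta_0}+\cdots+\ord{\beta_{i-1}}$. That combination cannot prove the lemma, because $t_{(d-1)-i}$ sits on the \emph{right-hand side} of the desired inequality: you need an upper bound (in fact an exact formula) for it, and neither slope symmetry nor the polygon supplies one, since a coefficient lying strictly above the polygon can have arbitrarily large valuation. Concretely, with $d-1=7$, $a=2$ and the symmetric slope multiset $\{0.5,0.75,0.75,1,1.25,1.25,1.5\}$ of Proposition~\ref{prop:NP1}, the abstract data $t=(0,\,0.5,\,1000,\,2,\,3,\,4.25,\,5.5,\,7)$ has exactly this symmetric polygon and satisfies every polygon lower bound, yet violates $t_5\ge t_2+2\cdot5-7$; so no bookkeeping in your step (4) -- which, as set up, only bounds the \emph{sum} $t_i+t_{(d-1)-i}$ from below -- can produce the claimed bound on the \emph{difference}. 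Your closing remark that the ``$\ge$'' comes from only having a lower bound on the mirrored coefficient has the logic backwards: a lower bound on $t_{(d-1)-i}$ makes the target inequality harder, not easier.

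The missing ingredient is the functional equation used at the level of coefficients rather than slopes. The map $\beta\mapsto q/\beta$ sends the reciprocal roots of $L_f$ to those of $L_{-f}=\sigma(L_f)$, where $\sigma$ is the automorphism $\zeta_p\mapsto\zeta_p^{-1}$ of $\Qp(\zeta_p)$ (valuation-preserving; alternatively invoke Corollary~3.3 with $\lambda=-1\in\Fp^\times$, already proved at this point). Comparing elementary symmetric functions gives, up to sign, $c_{(d-1)-i}=c_{d-1}\,q^{-i}\,\sigma(c_i)$, hence the exact relation $t_{(d-1)-i}=t_{d-1}-ai+t_i$; since here $a=2$ and the polygon ends at $(d-1,d-1)$, i.e.\ $t_{d-1}=d-1$, the lemma follows, in fact with equality. (Note the statement implicitly uses $a=2$: in general $t_{d-1}=a(d-1)/2$, which is the source of your hesitation between $w=2$ and $w=a$.) The paper itself takes a different route: an induction on $i$ working directly from the factorization $L_f(s)=c_0\prod_j(1-r_js)$ and the slope symmetry $\ord{r_j}+\ord{r_{d-j}}=a$, rather than from a coefficient-level functional equation; your sketch, to be correct, must actually exploit the identity you cite in step (1) instead of discarding it for the slope multiset.
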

\begin{proof}
For notational convenience, write $D=d-1$. We proceed by induction. When $i=0$, $t_0 = t_{D} + a(0) - D = D-D=0$, so the inequality clearly holds in this case. 

Now suppose that the claim holds for $i$ so that $t_i \geq t_{D-i} + ai- D$. Write the $L$-function in terms of its roots,
\[
	L_f(s) = c_0\prod_{i=1}^{D}(1-r_is),
\]
where without loss of generality we order the roots $\ord r_1\leq \ord r_2\leq \cdots \leq \ord r_{D}$, noting that each $\ord r_i$ is one of the slopes of the Newton polygon. The coefficients of $L_f(s)$ are related to the roots by
\[
	c_i = (-1)^{i}c_{0}\sum_{1\leq \epsilon_1<\cdots<\epsilon_{i}\leq D}\prod_{j=1}^{i}r_{\epsilon_j}.
\]
So $c_{i+1}$ is a sum of products of $i+1$ roots while $c_i$ is a sum of products of $i$ roots. 
Therefore, since we ordered the roots of $L$, we see that $t_{i+1}\geq t_i+\ord r_{i+1}$ and similarly, $t_{D-(i+1)} \leq t_{D-i} - \ord r_{D-(i+1)}$. But by induction and the symmetry of the slopes:
\begin{align*}
	t_{D-(i+1)} - t_{i+1} &\leq (t_{D-i} - \ord r_{D-(i+1)}) - (t_i+\ord r_{i+1})\\
		&\leq t_{D-i}-\ord r_{D-(i+1)} - (t_{D-i}+(ai-D))-\ord r_{i+1}\\
		&= -\ord r_{D-(i+1)} -(ai-D)-(a-\ord r_{(D-(i+1)})\\
		&= -(a(i+1)-D).
\end{align*}
\end{proof}

\begin{prop}\label{prop:NP1}
The $p$-adic Newton polygon $\NP(f)$ has slopes (with multiplicities): 
\[
	\{(0.5, 1.0), (0.75, 2.0), (1.0, 1.0), (1.25, 2.0), (1.5, 1.0)\}.
\]
\end{prop}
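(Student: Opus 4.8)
The plan is to realize $\NP(f)$ --- which throughout this section denotes the Newton polygon of $L_f(\pi,s)$, a polynomial of degree $d-1=7$ --- as the lower convex hull of the eight points $\{(i,\ord C_i)\}_{i=0}^{7}$, and to pin down all eight ordinates exactly. Three of them are handed to us by Proposition~\ref{prop:lemmaonec23}: $\ord C_1=\tfrac12$, $\ord C_2=\tfrac54$, $\ord C_3=2$; and $\ord C_0=0$ since $L_f(\pi,s)=L_f^*(\pi,s)/(1-s)$ has constant term $1$.

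To get the remaining four ordinates I would invoke Lemma~\ref{lemma:pointsymm} twice. Writing $t_i=\ord C_i$ and keeping $a=2$, $D:=d-1=7$, the lemma gives $t_i\ge t_{D-i}+2i-D$; replacing $i$ by $D-i$ gives $t_{D-i}\ge t_i+2(D-i)-D=t_i+D-2i$, that is, $t_i\le t_{D-i}+2i-D$. The two inequalities combine to the exact functional equation $t_{D-i}=t_i+D-2i$. Evaluating at $i=0,1,2,3$ then yields $\ord C_7=7$, $\ord C_6=\tfrac{11}{2}$, $\ord C_5=\tfrac{17}{4}$, $\ord C_4=3$; in particular $C_7\ne 0$, so $L_f(\pi,s)$ has degree exactly $7$ and its Newton polygon runs from $(0,0)$ to $(7,7)$.

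It then remains to compute the lower convex hull of
\[
	(0,0),\ (1,\tfrac12),\ (2,\tfrac54),\ (3,2),\ (4,3),\ (5,\tfrac{17}{4}),\ (6,\tfrac{11}{2}),\ (7,7),
\]
which is a finite slope comparison carried out vertex by vertex: from $(0,0)$ the minimal connecting slope is $\tfrac12$, to $(1,\tfrac12)$; from $(1,\tfrac12)$ it is $\tfrac34$, attained simultaneously at $(2,\tfrac54)$ and $(3,2)$ --- which are collinear with $(1,\tfrac12)$ --- so the next vertex is $(3,2)$ and the slope $\tfrac34$ occurs with multiplicity $2$; from $(3,2)$ it is $1$, to $(4,3)$; from $(4,3)$ it is $\tfrac54$, attained at the collinear pair $(5,\tfrac{17}{4})$, $(6,\tfrac{11}{2})$, so the next vertex is $(6,\tfrac{11}{2})$ and $\tfrac54$ occurs with multiplicity $2$; and the final segment $(6,\tfrac{11}{2})\to(7,7)$ has slope $\tfrac32$. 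This produces exactly the claimed slope multiset $\{(\tfrac12,1),(\tfrac34,2),(1,1),(\tfrac54,2),(\tfrac32,1)\}$, the multiplicities summing to $7$ as they must.

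The only step with any real content is the upgrade of Lemma~\ref{lemma:pointsymm} from an inequality to the exact symmetry $t_{D-i}=t_i+D-2i$: this is what converts the four not-yet-computed ordinates into exact values, and hence what makes the hull computation a genuine determination of $\NP(f)$ rather than merely an upper bound for it. Everything after that is the bookkeeping above.
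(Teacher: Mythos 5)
Your proof is correct, and it takes a genuinely different route from the paper's. The paper only ever uses Lemma~\ref{lemma:pointsymm} one-sidedly: it first pins the initial slope $\tfrac12$ via Proposition~\ref{prop:matrixbound} (which forces the polygon above $y=\tfrac12 x$, and together with $\ord{C_1}=\tfrac12$ makes $(1,\tfrac12)$ a vertex), gets the middle slope $1$ and last slope $\tfrac32$ from the symmetry of the slope multiset, and then uses the bounds $\ord{C_4}\geq 3$, $\ord{C_5}\geq 4.25$, $\ord{C_6}\geq 5.5$ only to force $(2,1.25)$ and $(3,2)$ to be breaking points, so that $\alpha_2=\alpha_3=\tfrac34$ and $\alpha_5,\alpha_6$ follow by symmetry. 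You instead apply Lemma~\ref{lemma:pointsymm} at both $i$ and $D-i$ to upgrade the inequality to the exact relation $t_{D-i}=t_i+D-2i$, which fixes all eight ordinates and reduces the proposition to a direct (and correctly executed) hull computation; notably you then need neither Proposition~\ref{prop:matrixbound} nor a separate appeal to slope symmetry. This is legitimate given the lemma as stated, and the equality you extract is in fact true: it is the coefficient-level functional equation $c_{D-i}=\pm\, c_D\, q^{-i}\,\overline{c_i}$ coming from Weil purity $r\overline{r}=q$, combined with the fact that complex conjugation acts on $\mathbb{Q}(\zeta_p)$ as a Galois automorphism and hence preserves $\ordp_p$. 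Be aware, though, that you are extracting strictly more from Lemma~\ref{lemma:pointsymm} than the paper does: your argument stands or falls with that lemma holding, as stated, at every index, whereas the paper hedges by combining only the one-sided bounds with the independent first-slope and slope-symmetry arguments. What your route buys is exact values and a self-contained convex-hull check; what the paper's route buys is weaker demands on the lemma at the cost of two extra ingredients.
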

\begin{proof}
We know the $p$-adic valuation of the first four coefficients of $\det(1-\phi\vert_{B_0} s)$ by Proposition~\ref{prop:lemmaonec23}, noting that $\ord C_0 = 0$. By Proposition~\ref{prop:matrixbound}, it is easy to see that $\ordp_\pi\Tr M^k\geq 2k$, and so the Newton polygon must lie above the lower bound $y=\frac{1}{2}x$. However, combined  with our computation that $\ord{C_1} = \frac{1}{2}$, this implies that the first slope must be $\frac{1}{2}$. Consequently the last slope must be $1.5$ and by symmetry again, the fourth slope must be $1$. It remains to find the second and third slopes (and hence the fifth and sixth slopes).

By Lemma~\ref{lemma:pointsymm}, we have $\ord{C_4}\geq 3$, $\ord{C_5}\geq 4.25$ and $\ord{C_6}\geq 5.5$, which coincide with the vertices on Figure~\ref{figure:one}. Therefore, when computing the lower convex hull, the points $\ord{C_2}=1.25$ and $\ord{C_3}=2$ will necessarily be breaking points and the second and third slopes are as claimed.
\end{proof} 

Thus, we have the Newton polygon in Figure~\ref{figure:one}. Note that the lower bound in red is the Hodge polygon (see \cite{wanexpo}, Definition 8.9), and the vertices are the $p$-adic orders of the coefficients of the $L$-function, computed precisely via Lauder and Wan's algorithm.
\begin{figure}
\centering
\includegraphics[scale=.25]{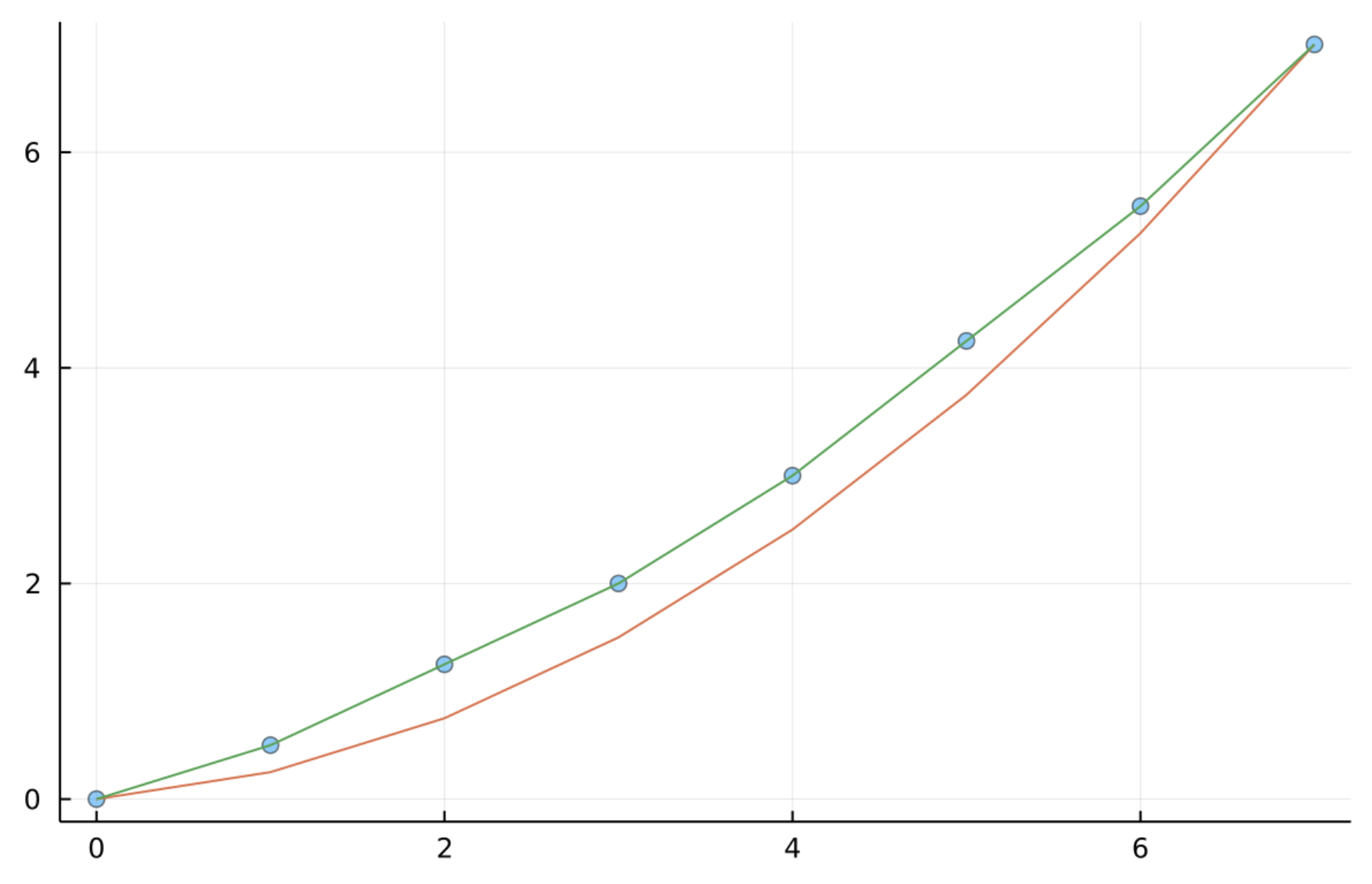}
\caption{$\NP(f)$ for $f(x)=x^8+x^6+x^2$}
\label{figure:one}
\end{figure}

\subsection{The Case $\lambda = \xi+2$}\label{section:xi2}

When $\lambda = \xi+2$, a key difference that occurs is that the action of $\tau$ on the matrix $\phi$ is nontrivial. 
Observe 
\[
	(\phi)_{i,j} = \sum_{k=1}^\infty F_{pi-k}(\hat{\lambda} \pi)\tau F_{pk-j}(\hat{\lambda}\pi)=\sum_{k=1}^\infty F_{pi-k}(\hat{\lambda} \pi)F_{pk-j}(\hat{\lambda}^p\pi).
\]

\begin{lemma}\label{lemma:p1lift}
If $\lambda = p-1\in\Fp$, then $\widehat{\lambda} = -1\in\Zptimes$.
\end{lemma}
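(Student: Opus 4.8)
The plan is to invoke the defining property of the \teich lift and then check that $-1$ has it. Recall that for $\lambda\in\Fptimes$, the \teich representative $\widehat{\lambda}\in\Zptimes$ is characterized as the unique element with $\widehat{\lambda}^{\,p-1}=1$ and $\widehat{\lambda}\equiv\lambda\pmod{p}$; equivalently, $\widehat{\lambda}=\lim_{n\to\infty}(\lambda')^{p^n}$ for any lift $\lambda'\in\Zp$ of $\lambda$.

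First I would observe that $-1\in\Zp$ is a $(p-1)$-st root of unity: for odd $p$ one has $(-1)^{p-1}=1$, and this is the only case that matters, since for $p=2$ the statement reads $\widehat{1}=1$, which is immediate. Next, $-1\equiv p-1\pmod{p}$, so $-1$ reduces to $\lambda=p-1$ in $\Fptimes$. By the uniqueness in the characterization above, $\widehat{\lambda}=-1$. Alternatively, one can argue straight from the limit formula: taking $\lambda'=-1$, we get $(\lambda')^{p^n}=-1$ for every $n$ (as $p^n$ is odd when $p$ is odd), so the limit defining $\widehat{\lambda}$ equals $-1$ on the nose.

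There is essentially no obstacle here; the only point to keep an eye on is that, although $\lambda$ is being regarded inside $\Fq$, the relevant \teich lift is the one valued in roots of unity of order dividing $q-1$, and since $(p-1)\mid(q-1)$ the two normalizations agree and $-1$ is the correct representative in either. The lemma will be used below to simplify the case $\lambda=\xi+2$ (whose $(p-1)$-st power is $-1$) by reducing the relevant Frobenius twist $\widehat{\lambda}^{\,p}$ to something governed by the constant $-1$.
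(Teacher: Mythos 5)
Your argument is correct for odd $p$ (the only case that matters here), but it takes a genuinely different route from the paper's. You invoke the characterization of the \teich representative as the unique $(p-1)$-st root of unity in $\Zptimes$ reducing to $\lambda$ modulo $p$ --- equivalently the limit $\lim_{n}(\lambda')^{p^n}$ for \emph{any} lift $\lambda'$ of $\lambda$ --- and then just check that $-1$ qualifies: $(-1)^{p-1}=1$ and $-1\equiv p-1\pmod p$, or, running the limit with the lift $\lambda'=-1$, that $(-1)^{p^n}=-1$ because $p^n$ is odd. The paper instead fixes the lift $t=p-1$ in the limit formula and carries out an induction on the $p$-adic digits of $\widehat{\lambda}$, expanding $(p-1)^{p^i}$ binomially to get $(p-1)^{p^i}\equiv p^{i+1}-1\bmod p^{i+1}$ and concluding every digit equals $p-1$. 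Your version is shorter and conceptually cleaner, since it leans on the uniqueness statement (or on the freedom to choose the more convenient lift $-1$ rather than $p-1$); the paper's digit computation buys only self-containedness at the level of binomial coefficients. One small correction to your aside on $p=2$: in that case the lemma as written would assert $\widehat{1}=-1$, which is false in $\mathbb{Z}_2$ (it does not ``read $\widehat{1}=1$''), so both your proof and the paper's genuinely need $p$ odd --- harmless here, as the lemma is only applied in the counterexample section where $p=5$, and the paper's own proof invokes oddness of $p$ at the same point.
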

\begin{proof}
Let $t=p-1\in\Zptimes$. It is well-known that the \teich lifting of $p-1\in\Fp$ can be computed by
\[
	\widehat{p-1} = \lim_{i\to\infty}t^{p^i},
\]
where $t^{p^i}$ computes the \teich lifting to $p$-adic accuracy $p^{i+1}$. Write $\hat{\lambda} = \sum_{i=0}^\infty a_ip^i$. We claim $a_i=p-1$ for all $i$.

Proceed by induction on $i$. For $i = 0$, $a_0 = t^{1} = p-1$. So assume that $a_{k} = p-1$ for $k< i$. 
Consider that:
\begin{align*}
	(p-1)^{p^i} &= \sum_{k=0}^{p^i}\binom{p^i}{k}p^{p^i-k}(-1)^k
	\equiv \sum_{k=0}^i\binom{p^i}{p^i-k}p^k(-1)^{p^i-k}
	\\&\equiv (-1)^{p^i} + \sum_{k=1}^i\frac{\binom{p^i}{p^i-k}}{p^i}p^{i+k}(-1)^{p^i-k}\bmod\ p^{i+1},
\end{align*}
where the last congruence follows from the observation that for $k\geq 1$, $p^k\mid\binom{p^k}{p^k-i}$. The only remaining term $\bmod\ p^{k+1}$ is $(-1)^{p^k}$, which, because $p$ is an odd prime, yields $-1$. 
Therefore,  $t^{p^i} = (p-1)^{p^i}\equiv p^{i+1}-1\bmod p^{i+1}$. However, using the induction assumption and solving for $a_i\bmod\ p^{i+1}$ yields:
\begin{align*}
	a_i = \frac{(p^{i+1}-1) - \sum_{i=0}^{i-1}(p-1)p^i}{p^i} = \frac{(p^{i+1}-1)-(p^i-1)}{p^i} = p-1.
\end{align*}
\end{proof}

\begin{lemma}\label{lemma:sumzero}
Let $\lambda\in\Fqtimes$ be a root of $x^{p-1}+1$ and let $w\in\Zqtimes$ be a \teich lifting of $\lambda$. For $b,c$ positive integers with $b+c$ odd:
\[
	w^{b+pc}+w^{c+pb} = 0.
\]
\end{lemma}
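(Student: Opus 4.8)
The plan is to reduce the identity to the single fact that $w^{p-1}=-1$, after which it falls out of an elementary factorization. First I would observe that $w^{p-1}$ is a Teichm\"uller lifting: since raising units to the $(p-1)$-st power commutes with reduction modulo the maximal ideal, $w^{p-1}$ is the unique $(q-1)$-st root of unity reducing to $\lambda^{p-1}$. By hypothesis $\lambda$ is a root of $x^{p-1}+1$, so $\lambda^{p-1}=-1\in\Fp$, and hence by Lemma~\ref{lemma:p1lift} its \teich lifting is $-1\in\Zptimes$. Therefore $w^{p-1}=-1$. (If one prefers to avoid invoking Lemma~\ref{lemma:p1lift}, note instead that $w^{2(p-1)}$ lifts $\lambda^{2(p-1)}=1$, so $w^{2(p-1)}=1$ and $w^{p-1}=\pm1$; and $w^{p-1}\neq 1$ because $\lambda^{p-1}=-1\neq 1$ as $p$ is odd.)

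Next I would factor, using that $w$ is a unit,
\[
	w^{b+pc}+w^{c+pb} = w^{c+pb}\left(w^{(b-c)(1-p)}+1\right).
\]
Since $b+c$ is odd, $b-c=(b+c)-2c$ is also odd. Because $(b-c)(1-p)=(c-b)(p-1)$, we get
\[
	w^{(b-c)(1-p)} = \left(w^{p-1}\right)^{c-b} = (-1)^{c-b} = -1,
\]
so the parenthesized factor is $-1+1=0$, and hence $w^{b+pc}+w^{c+pb}=0$.

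The only step requiring any care is the identification $w^{p-1}=-1$, which rests on the functoriality of the \teich lift together with Lemma~\ref{lemma:p1lift}; the remaining manipulations are a one-line factorization and a parity check, so I do not anticipate a genuine obstacle here.
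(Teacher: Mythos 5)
Your proposal is correct and follows essentially the same route as the paper: both reduce the identity to $w^{p-1}=\hat{\lambda}^{p-1}=\widehat{\lambda^{p-1}}=\widehat{p-1}=-1$ via multiplicativity of the Teichm\"uller lift and Lemma~\ref{lemma:p1lift}, and then finish with an elementary factorization plus a parity check on $b+c$. Your factoring out $w^{c+pb}$ rather than the paper's $w^{b+c}$ is a cosmetic difference only, and your fallback argument avoiding Lemma~\ref{lemma:p1lift} (using $w^{2(p-1)}=1$ and $\lambda^{p-1}\neq 1$) is also sound.
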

\begin{proof}
It is clear that $w^{b+pc}+w^{c+pb} = w^{b+c} (w^{(p-1)c} + w^{(p-1)b})$.  But because $b+c$ is odd, we can, without loss of generality, take $c$ to be odd and $b$ to be even. Then, by Lemma~\ref{lemma:p1lift}:
\begin{align*}
	w^{(p-1)c} + w^{(p-1)b} &= (w^{p-1})^{2k+1}+(w^{p-1})^{2k'} = (-1)^{2k+1}+(-1)^{2k'} \\
				&= -1  + 1=0,
\end{align*}
since $w^{p-1}=\hat{\lambda}^{p-1} = \widehat{\lambda^{p-1}} = \widehat{p-1}= -1$.
\end{proof}

For a product of power series $G = g_1\cdots g_k$ we refer to the $(r_1, \cdots, r_k)$-monomial, $[g_1\cdots g_k]_{(r_1,\cdots,r_k)}$, as the monomial obtained from the product $G$ by taking the $r_1$-power term from $g_1$, the $r_2$-power term from $g_2$, etc. 
\begin{lemma}\label{lemma:sumonlyeven}
Let $G(x), H(x)\in \Qq[[x]]$. Then the sum $G(\hat{\lambda} x)H(\hat{\lambda}^p x) + G(\hat{\lambda}^p x)H(\hat{\lambda} x)$ has only even powers of $x$.
\end{lemma}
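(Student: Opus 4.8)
The plan is to read off the coefficient of each power of $x$ in the sum and show that every odd-degree coefficient vanishes. Write $G(x)=\sum_{b\ge 0}g_b x^b$ and $H(x)=\sum_{c\ge 0}h_c x^c$ in $\Qq[[x]]$. Expanding the convolution, the coefficient of $x^n$ in $G(\hat{\lambda} x)H(\hat{\lambda}^p x) + G(\hat{\lambda}^p x)H(\hat{\lambda} x)$ is
\[
	\sum_{\substack{b,c\ge 0\\ b+c=n}} g_b h_c\left(\hat{\lambda}^{\,b+pc} + \hat{\lambda}^{\,pb+c}\right).
\]
So it suffices to prove that $\hat{\lambda}^{\,b+pc} + \hat{\lambda}^{\,pb+c} = 0$ whenever $b+c$ is odd, and the lemma follows immediately.

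To establish this, I would factor
\[
	\hat{\lambda}^{\,b+pc} + \hat{\lambda}^{\,pb+c} = \hat{\lambda}^{\,b+c}\left(\hat{\lambda}^{(p-1)c} + \hat{\lambda}^{(p-1)b}\right)
\]
and use the identity $\hat{\lambda}^{p-1} = \widehat{\lambda^{p-1}} = \widehat{-1} = -1$, which holds because $\lambda$ is a root of $x^{p-1}+1$, so $\lambda^{p-1} = p-1\in\Fp$, and then Lemma~\ref{lemma:p1lift} gives $\widehat{p-1} = -1$. Hence $\hat{\lambda}^{(p-1)b} = (-1)^b$ and $\hat{\lambda}^{(p-1)c} = (-1)^c$, so the bracketed factor equals $(-1)^b + (-1)^c$, which is zero precisely when $b$ and $c$ have opposite parities, i.e. precisely when $b+c$ is odd. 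This also handles the boundary terms $b=0$ or $c=0$, which is the only case not literally covered by Lemma~\ref{lemma:sumzero} as stated; alternatively one applies Lemma~\ref{lemma:sumzero} directly to the pairs with $b,c\ge 1$ and disposes of the two extreme terms by the same one-line computation.

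I do not anticipate any genuine obstacle here: the substantive input is the parity identity $\hat{\lambda}^{p-1} = -1$ already recorded in Lemmas~\ref{lemma:p1lift} and \ref{lemma:sumzero}, and the rest is bookkeeping with the coefficient-wise expansion. Everything is a formal manipulation in $\Qq[[x]]$, so no convergence considerations enter, and the argument applies verbatim.
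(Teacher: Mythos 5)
Your proof is correct and takes essentially the same route as the paper: expand coefficient-wise, pair the $(b,c)$-term of $G(\hat{\lambda}x)H(\hat{\lambda}^p x)$ with the corresponding term of $G(\hat{\lambda}^p x)H(\hat{\lambda}x)$, and kill each odd-degree pair via $\hat{\lambda}^{p-1}=-1$ (the content of Lemmas~\ref{lemma:p1lift} and~\ref{lemma:sumzero}). If anything, you are slightly more careful than the paper's proof, which only treats indices $r_1,r_2\geq 1$, whereas your direct computation also covers the boundary terms with $b=0$ or $c=0$.
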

\begin{proof}
Write $G(x) = \sum_{k=0}^\infty G_k x^k$ and $H(x)=\sum_{k=0}^\infty H_k x^k$. For $r_1,r_2\geq 1$ with $r_1+r_2$ odd, we compute that
\begin{align*}
	[G(\hat{\lambda} x)H(\hat{\lambda}^p x)]_{(r_1,r_2)} &+ [G(\hat{\lambda}^p x)H(\hat{\lambda} x)]_{(r_1,r_2)}  \\
		 &=G_{r_1}\hat{\lambda}^{r_1}x^{r_1}H_{r_2}\hat{\lambda}^{pr_2}x^{r_2} + G_{r_1}\hat{\lambda}^{pr_1}x^{r_1}H_{r_2}\hat{\lambda}^{r_2}x^{r_2}\\
		 & =(\hat{\lambda}^{r_1+pr_2}+\hat{\lambda}^{r_2+pr_1}) G_{r_1}H_{r_2}x^{r_1+r_2}\\
		 &=0,
\end{align*}
where the last equality follows from Lemma~\ref{lemma:sumzero} and the fact that $\hat{\lambda}^{p-1}+1=0$.
\end{proof}

\begin{prop}\label{prop:even_trace}
	For $k\geq 1$, $\Tr(M^k)$ has only even powers of $\pi$.
\end{prop}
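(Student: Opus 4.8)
The plan is to reduce the parity statement about $\Tr(M^k)$ to the monomial-level cancellation established in Lemma~\ref{lemma:sumonlyeven}, exploiting the symmetry of the trace under simultaneously swapping the two occurrences of $\hat\lambda$ and $\hat\lambda^p$. Recall from Section~\ref{section:xi2} that
\[
	(\phi)_{i,j} = \sum_{k\geq 1} F_{pi-k}(\hat{\lambda}\pi)\,F_{pk-j}(\hat{\lambda}^p\pi),
\]
so that each entry of $M$, hence each diagonal entry of $M^k$, and hence $\Tr(M^k)$, is a (convergent) sum of terms of the form
\[
	T = \prod_{j=1}^{k} F_{pw_{j-1}-v_j}(\hat{\lambda}\pi)\, F_{pv_j - w_j}(\hat{\lambda}^p\pi),
\]
ranging over closed walks $w_0 = w_k$, with intermediate indices $v_1,\dots,v_k$, exactly as in the proof of Proposition~\ref{prop:matrixbound}. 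The key observation is that $\Tr(M^k)$ is invariant under the involution $\sigma$ that applies $\tau$ to every factor, i.e. sends $T$ to $\tau T = \prod_j F_{pw_{j-1}-v_j}(\hat{\lambda}^p\pi)\, F_{pv_j-w_j}(\hat{\lambda}\pi)$: indeed $\tau$ fixes $\pi$ and $\Tr(M^k)\in\Zp[[\pi]]$ since the trace of a Frobenius-linear-operator power is Galois-fixed (alternatively, $\tau\Tr(M^k) = \Tr(\tau M^k) = \Tr((\tau A)(\tau^2 A)\cdots) = \Tr(M^k)$ using $\tau^2 = \mathrm{id}$ and cyclicity of the trace). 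Therefore $\Tr(M^k) = \tfrac12\big(\Tr(M^k) + \sigma\Tr(M^k)\big)$, and it suffices to show each symmetrized pair $T + \sigma T$ has only even powers of $\pi$.

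For a fixed walk data $(w_0,\dots,w_{k-1},v_1,\dots,v_k)$, group the $2k$ factors of $T$ into the $k$ pairs $\big(F_{pw_{j-1}-v_j},\,F_{pv_j-w_j}\big)$ for $j = 1,\dots,k$, or more symmetrically pair up $F_{pv_j-w_j}(\hat\lambda^p\pi)$ with $F_{pw_j - v_{j+1}}(\hat\lambda\pi)$ (indices cyclic). Write $G = F_{pw_{j-1}-v_j}$ and $H = F_{pv_j-w_j}$, viewed as elements of $\Qq[[x]]$ (their coefficients are in $\Zp$ up to the $\hat a_i$, so strictly we apply Lemma~\ref{lemma:sumonlyeven} with $G,H$ the relevant power series in $x$ evaluated at $x=\pi$). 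The sum $T + \sigma T$ is precisely
\[
	\prod_{j=1}^{k}\Big(G_j(\hat\lambda\pi)H_j(\hat\lambda^p\pi)\Big) + \prod_{j=1}^{k}\Big(G_j(\hat\lambda^p\pi)H_j(\hat\lambda\pi)\Big),
\]
which is not literally a single application of Lemma~\ref{lemma:sumonlyeven} but reduces to it: expand each $F_{pa-b}(\hat\lambda\pi)F_{pc-d}(\hat\lambda^p\pi)$ monomial-by-monomial in $\pi$ using Lemma~\ref{lemma:Fi}. A monomial of $T$ carries $\pi$-power $\sum_j(|\vec n^{(j)}| + |\vec m^{(j)}|)$ together with a scalar $\hat\lambda^{\,e}\hat\lambda^{\,pe'}$ where $e = \sum_j |\vec n^{(j)}_{\mathrm{weighted}}|$-type exponents coming from the $\hat\lambda$-slots and $e'$ from the $\hat\lambda^p$-slots; in $\sigma T$ the roles of $e$ and $e'$ are swapped. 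When the total $\pi$-power is odd, I claim $e + e'$ is odd as well, because each $F$-factor $F_m(\hat c\,\pi)$ contributes to both the $\pi$-exponent and the $\hat c$-exponent the same quantity $|\vec n|$ (the $\pi$-exponent of that term equals the $\hat\lambda$-or-$\hat\lambda^p$-exponent of that term, by Lemma~\ref{lemma:Fi}). Hence $e \equiv$ (sum of $\pi$-exponents from $\hat\lambda$-slot factors) and $e'\equiv$ (sum from $\hat\lambda^p$-slot factors) mod $2$, so $e + e' \equiv$ total $\pi$-power $\equiv 1 \pmod 2$. Then $\hat\lambda^{e + pe'} + \hat\lambda^{e' + pe} = \hat\lambda^{e+e'}(\hat\lambda^{(p-1)e'} + \hat\lambda^{(p-1)e}) = 0$ by Lemma~\ref{lemma:sumzero} (with $b = e$, $c = e'$, $b + c$ odd), killing the matched pair of monomials. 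Summing over all monomials and all walk data gives $T + \sigma T \in \Zp[[\pi^2]]$, hence $\Tr(M^k)\in\Zp[[\pi^2]]$.

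The main obstacle, and the place to be careful, is the bookkeeping of the exponents: making precise that for a term of $F_m(\hat c\,\pi)$ the $\pi$-valuation of the monomial and the exponent of $\hat c$ in it are literally equal (this is exactly $u_{\vec n}\vec a^{\vec n}(\hat c\,\pi)^{|\vec n|}$ in Lemma~\ref{lemma:Fi}, so the exponent of $\hat c$ is $|\vec n|$, same as the $\pi$-power), and then tracking that the $\hat\lambda$-exponent of a whole monomial of $T$ is congruent mod $2$ to the $\pi$-contribution of its $\hat\lambda$-slot factors. Once that identification is in hand, the cancellation is forced by Lemma~\ref{lemma:sumzero} exactly as in Lemma~\ref{lemma:sumonlyeven}. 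One should also note the hidden subtlety that Lemma~\ref{lemma:sumzero} requires $b, c \geq 1$, i.e. the two exponents positive; the $b = 0$ or $c = 0$ edge cases correspond to a monomial using no $\hat\lambda$-slot or no $\hat\lambda^p$-slot factors with positive degree, which forces the opposite exponent (the total) to be odd and nonzero, so one side is a genuine positive exponent and the other is $\hat\lambda^0 = 1$; a direct check then shows $\hat\lambda^{pe'} + \hat\lambda^{e'} = \hat\lambda^{e'}(\hat\lambda^{(p-1)e'}+1) = 0$ still holds when $e'$ is odd, so nothing breaks.
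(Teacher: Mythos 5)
Your proof is correct and takes essentially the same route as the paper: both reduce to the cancellation of Lemma~\ref{lemma:sumonlyeven} (hence Lemma~\ref{lemma:sumzero}, via $\hat{\lambda}^{p-1}=-1$) by pairing each walk-product $T=\prod_j F_{pw_{j-1}-v_j}(\hat{\lambda}\pi)F_{pv_j-w_j}(\hat{\lambda}^p\pi)$ with its $\hat{\lambda}\leftrightarrow\hat{\lambda}^p$-swapped partner. The only difference is packaging: you obtain the partner by symmetrizing with the Galois conjugate and justifying $\tau$-invariance of the trace by cyclicity, whereas the paper produces the same swapped product inside the same trace expansion by re-indexing the walk ($w_j'=v_j$, $v_j'=w_{j-1}$) --- the two devices are equivalent.
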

\begin{proof}
Fix $k\geq 1$. 
Just as in the proof of Proposition~\ref{prop:matrixbound}, 
\[
	(M^k)_{i,i} =  \sum_{\substack{w_1, \cdots, w_{k-1}\geq 1\\w_0=w_k=i}} \prod_{\ell = 1}^k \left (\sum_{v = 1}^\infty F_{pw_{\ell-1}-v}(\lambda\pi) F_{pv-w_\ell}(\lambda^p\pi)\right ),
\]
and so 
\[
	\Tr M^k = \sum_{i=1}^\infty\sum_{w_1, \cdots, w_{k-1}\geq 1} \prod_{\ell = 1}^k \left (\sum_{v = 1}^\infty F_{pw_{\ell-1}-v}(\lambda\pi) F_{pv-w_\ell}(\lambda^p\pi)\right ).
\]
Expanding $\Tr M^k$, each monomial comes from a product of the form:
\[
	T = \prod_{j=1}^k F_{pw_{j-1}-v_j}(\lambda\pi) F_{pv_j-w_j}(\lambda^p\pi),
\]
some $v_1, \cdots, v_k\geq 1$ and $w_0, \cdots, w_k\geq 1$, with $w_0=w_k=i$ and $i\geq 1$. 

Say we have a $(r_1, \cdots, r_{2k})$-monomial coming from a product 
\[
	\prod_{j=1}^k F_{pw_{j-1}-v_j} F_{pv_j-w_j},
\]
some fixed $w_j$, $v_j$ and $i$. Define a new set of indices $w_j' = v_j$ and $v_j'=w_{j-1}$. These indices yield a product:
\[
	T' = \prod_{j=1}^k F_{pv_{j-1}-w_{j-1}}(\lambda\pi) F_{pw_{j-1}-v_j}(\lambda^p\pi).
\]
Define $G(x) =  \prod_{j=1}^k F_{pv_{j-1}-w_{j-1}}(x)$ and $H(x) = \prod_{j=1}^k F_{pw_{j-1}-v_j}(x)$.
Then, by Lemma~\ref{lemma:sumonlyeven}, adding the $(r_1, \cdots, r_{2k})$-monomial from $T$ to the $(r_{2k}, r_1, \cdots, r_{2k-1})$-monomial of $T'$ yields zero, and the proposition follows.
\end{proof}

To see this explicitly, if we compute the traces of $M$ and its powers when $\lambda = \xi+2$, all the odd powers of $\pi$ collapse.
\begin{lemma}\label{lemma:xi2_traces}
When $\lambda = \xi + 2$,
\begin{align*}
\Tr(M) &\equiv 15 + 68\pi^{2}\bmod p^3\\
\Tr(M^2) &\equiv 30+ 90\pi^{2} \bmod p^3\\
\Tr(M^3) &\equiv 100 + 90\pi^{2} \bmod p^3
\end{align*}
\end{lemma}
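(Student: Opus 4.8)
The plan is to compute these three traces in exactly the same way as in Lemma~\ref{1_traces}, running the Julia/Nemo implementation of \textit{Step 0} through \textit{Step 4}, but now with $\hat{\lambda}$ replaced by the Teichm\"uller lift of $\xi+2$ inside a suitable realization of $\Zq/p^3\Zq$. Concretely, I would first fix a monic lift to $\Zp[\xi]$ of the minimal polynomial of $\xi$ over $\Fp$, form the unramified extension $\Zq/p^3\Zq = (\Zp/p^3)[\xi]/(\text{that polynomial})$, and compute $\hat{\lambda}$ by iterating $t\mapsto t^p$ a few times (three iterations suffice to be correct mod $p^3$); here $\tau$ is the $p$-power Frobenius on this ring, so $\tau F_i(\hat{\lambda}\pi) = F_i(\hat{\lambda}^p\pi)$ as displayed just before Lemma~\ref{lemma:p1lift}. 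Then I would assemble $F = E(\pi\hat{\lambda}x^8)E(\pi\hat{\lambda}x^6)E(\pi\hat{\lambda}x^2)$ modulo $x^{20p}$, build $A = [F_{pi-j}]_{1\le i,j\le 20}$, set $M = A\,\tau A$, and read off $\Tr(M^i)$ for $i=1,2,3$ as elements of $T = (\Zp/p^3)[\pi]/(\pi^{p-1}+p)$ after taking the trace down to $\Zp$ of the $\Zq$-coefficients (or rather, recording the output in the form guaranteed by Proposition~\ref{prop:even_trace}).

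The only structural input beyond the computation is the truncation bound: Proposition~\ref{prop:matrixbound} was proved using only the $\pi$-adic bounds of Lemma~\ref{lemma:Ffi} on $F_i(\pi)$, and those bounds are invariant under $\pi\mapsto\hat{\lambda}\pi$ since $\ord\hat{\lambda}=0$; hence the same $20\times 20$ principal minor of each $M^k$ captures everything modulo $p^3$ for $\lambda=\xi+2$ just as it did for $\lambda=1$. I would state this explicitly so the reader sees the $20\times 20$ truncation is still justified. I would also note that Proposition~\ref{prop:even_trace} guarantees \emph{a priori} that the odd-$\pi$-power coefficients vanish, so the congruences in the statement are consistent with the theory and the numerics serve only to pin down the surviving even-power coefficients $15, 68$; $30, 90$; $100, 90$.

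The main obstacle — such as it is — is bookkeeping rather than mathematics: one must be careful that the chosen lift of the minimal polynomial of $\xi$ and the chosen normalization of $\xi$ match whatever convention fixes "$\lambda=\xi+2$" (the Remark listing the four roots of $\lambda^{p-1}+1$ is the consistency check here, since Theorem~\ref{main1} forces all four to give the same traces), and that enough terms of the Artin--Hasse series and of $F$ are carried so that no coefficient needed for the $20\times 20$ minor is truncated. These points are handled exactly as in the proof of Lemma~\ref{1_traces}. Once the computation is run, the three congruences are simply the output, and the proof is complete; as before, the explicit implementation is in the GitHub repository cited in the introduction.
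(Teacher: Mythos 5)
Your proposal is correct and follows essentially the same route as the paper: the paper's proof of this lemma is simply a reference back to the algorithm of Lemma~\ref{1_traces}, run with $\hat{\lambda}$ taken to be the Teichm\"uller lift of $\xi+2$ in the unramified quadratic extension, with the $20\times 20$ truncation still justified by Proposition~\ref{prop:matrixbound} since $\ord\hat{\lambda}=0$. Your additional remarks on constructing $\Zq/p^3\Zq$, iterating Frobenius to get $\hat{\lambda}$, and the consistency check via Proposition~\ref{prop:even_trace} are sound elaborations of the same computation.
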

\begin{proof}
See proof of Lemma~\ref{1_traces}.
\end{proof}

\begin{lemma}\label{lemma:xi2_c2c3}
We have  $\ord C_1 =\frac{1}{2}$, $\ord C_2\geq 2$ and $\ord C_3\geq  3$. 
\end{lemma}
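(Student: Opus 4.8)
The plan is to substitute the trace values of Lemma~\ref{lemma:xi2_traces} into the trace formulas of Lemma~\ref{lemma:traceform} and read off $p$-adic valuations directly, working throughout (as in the proof of Lemma~\ref{1_traces}) in the ring $T = (\Zp/p^3\Zp)[\pi]/(\pi^{p-1}+p)$, so that $\pi^4 = -p$. Because Proposition~\ref{prop:even_trace} guarantees that $\Tr M$, $\Tr(M^2)$ and $\Tr(M^3)$ contain only even powers of $\pi$, each $C_i$ produced this way is, after reduction in $T$, supported on $\{1,\pi^2\}$; writing $C_i \equiv a_0 + a_2\pi^2$, Lemma~\ref{lemma:hord} gives $\ord C_i = \min(\ord{a_0},\, \frac{1}{2} + \ord{a_2})$. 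I would carry out the three cases in order.

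First, $C_1 = -\Tr M \equiv -(15+68\pi^2) \bmod p^3$; here $\ord{15}=1$ while $\ord{68\pi^2}=\frac{1}{2}$, so $\ord C_1 = \frac{1}{2}$, and this is exact since any higher correction has valuation $\geq 3$. Second, for $C_2 = \frac{1}{2}\bigl((\Tr M)^2 - \Tr(M^2)\bigr)$ one substitutes and folds $\pi^4 = -p$ back in; the computation is routine and yields $C_2 \equiv 100 - 25\pi^2 \bmod p^3$, that is, $p^2$ times a unit, whence $\ord C_2 = 2 \geq 2$. Third, for $C_3 = -\frac{1}{6}(\Tr M)^3 + \frac{1}{2}\Tr M\,\Tr(M^2) - \frac{1}{3}\Tr(M^3)$, clearing the denominator $6$ and substituting, both the constant term and the $\pi^2$-coefficient of $-(\Tr M)^3 + 3\,\Tr M\,\Tr(M^2) - 2\,\Tr(M^3)$ reduce to $0$ modulo $p^3$, so $C_3 \equiv 0 \bmod p^3$ and therefore $\ord C_3 \geq 3$. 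Reducing modulo $p^3$ is enough precisely because the three assertions needed are that $\ord C_1$ is a fixed number below $1$, that $\ord C_2 \geq 2$, and that $\ord C_3 \geq 3$.

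Since all of this is elementary arithmetic in $T$, there is no conceptual obstacle; the one step that is not automatic, and that I would verify by hand (and re-check with the Julia routine of Lemma~\ref{lemma:xi2_traces}), is the total cancellation in $C_3$ modulo $p^3$. This is the crucial difference from the case $\lambda=1$, where $\ord C_3 = 2$ by Proposition~\ref{prop:lemmaonec23}: the collapse of the odd $\pi$-powers forced by Proposition~\ref{prop:even_trace}, combined with the particular trace values, pushes $\ord C_3$ from $2$ up to at least $3$, and it is exactly this jump that will make $\NP((\xi+2)f) \neq \NP(f)$. One may also sanity-check the inequalities against the symmetry constraint of Lemma~\ref{lemma:pointsymm}.
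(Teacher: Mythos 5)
Your proposal is correct and follows essentially the same route as the paper: substitute the traces of Lemma~\ref{lemma:xi2_traces} into the formulas of Lemma~\ref{lemma:traceform} and reduce in $T=(\Zp/p^3\Zp)[\pi]/(\pi^{p-1}+p)$, the paper recording the outcome as $C_1\equiv 2\pi^2 \bmod p$, $C_2\equiv 0\bmod p^2$, $C_3\equiv 0\bmod p^3$. Your explicit values check out ($C_2\equiv 100-25\pi^2$ and $-(\Tr M)^3+3\Tr M\,\Tr(M^2)-2\Tr(M^3)\equiv -250\pi^2\equiv 0 \bmod p^3$), so nothing further is needed.
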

\begin{proof}
Just like Lemma~\ref{prop:lemmaonec23}, this is a calculation following from Lemma~\ref{lemma:traceform} and Lemma~\ref{lemma:xi2_c2c3}. In this case, we get that $C_1 \equiv 2\pi^2\bmod p$, $C_2\equiv 0\bmod p^2$ and $C_3\equiv 0\bmod\ p^3$. 
\end{proof}

\begin{figure}[H]
\centering
\includegraphics[scale=.25]{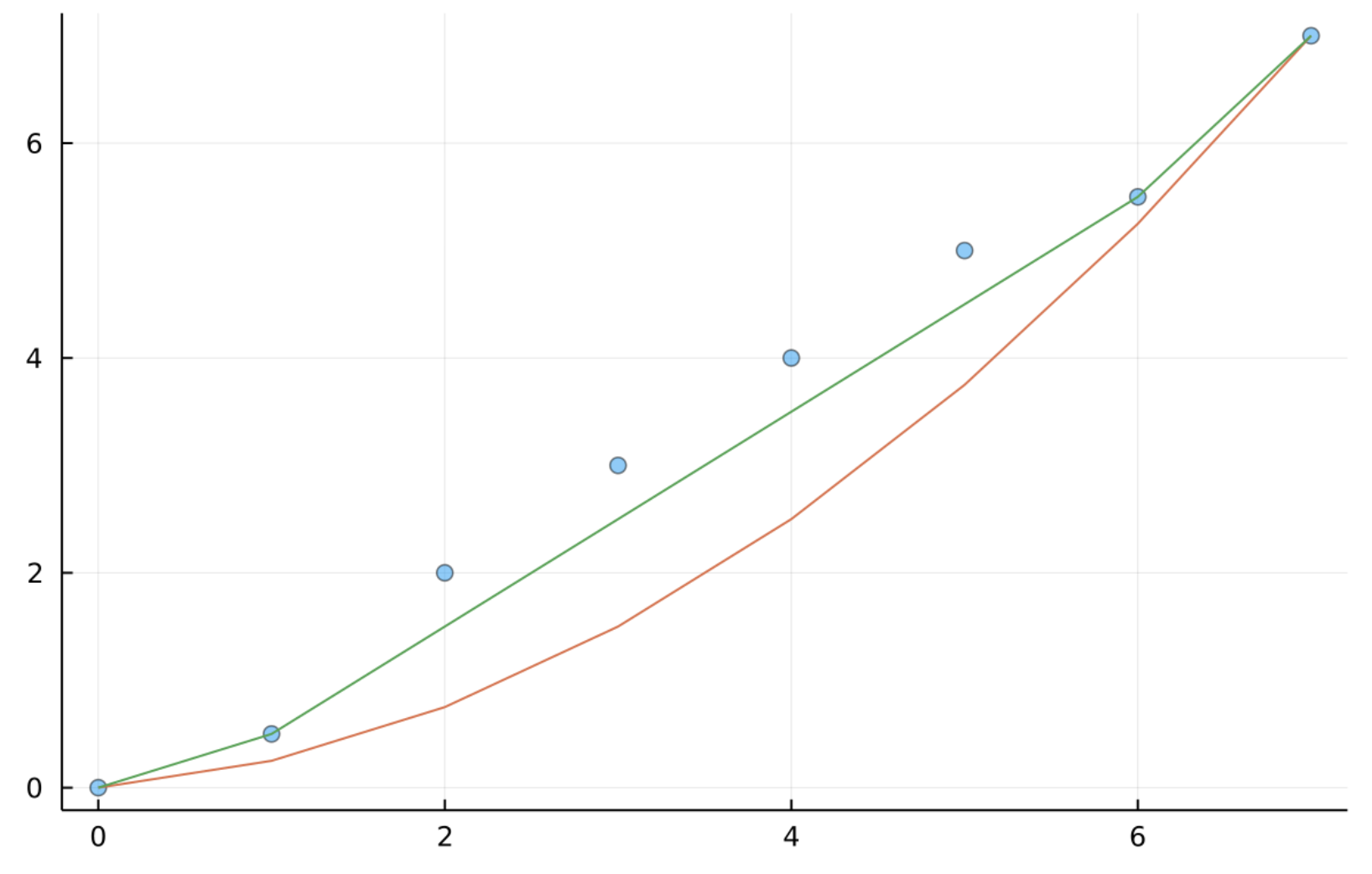}
\caption{$\NP(\lambda f)$ for $f = x^8+x^6+x^2$, $\lambda = \xi+2$}
\end{figure}

\begin{prop}
When $\lambda = \xi +2$, the Newton polygon $\NP(\lambda f)$ has $p$-adic slopes (with multiplicities):
\[
	\{(0.5, 1.0), (1.0, 5.0), (1.5, 1.0)\}.
\]
\end{prop}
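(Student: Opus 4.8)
The plan is to mimic exactly the argument used for the case $\lambda = 1$ in Proposition~\ref{prop:NP1}, but feeding in the collapsed data from Lemma~\ref{lemma:xi2_c2c3} and Proposition~\ref{prop:even_trace}. First I would record that the Newton polygon of $L_f(\pi,s)$ runs from $(0,0)$ to $(d-1,d-1) = (7,7)$, so the slopes sum to $7$ over $7$ segments and the symmetry $\alpha_i + \alpha_{8-i} = 2$ holds (with $\alpha_4 = 1$ forced). From $\ord C_1 = \tfrac12$ together with the lower bound $\ordp_\pi \Tr(M^k) \geq 2k$ from Proposition~\ref{prop:matrixbound} — which forces every $\ord C_i \geq \tfrac{i}{2}$ and hence the polygon to lie on or above the line $y = \tfrac12 x$ — I would conclude the first slope is exactly $\tfrac12$ and therefore the last slope is $\tfrac32$. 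That accounts for two segments of slope $\tfrac12$, one of slope $1$ (the middle, by symmetry), and one of slope $\tfrac32$.

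The new input is that $\ord C_2 \geq 2$ and $\ord C_3 \geq 3$ (Lemma~\ref{lemma:xi2_c2c3}), which is strictly worse than the $\lambda=1$ case and is exactly what makes the polygon degenerate. I would combine these with Lemma~\ref{lemma:pointsymm} (with $a = 2$, $d-1 = 7$), which gives $\ord C_i \geq \ord C_{7-i} + 2i - 7$: taking $i = 5,6$ yields $\ord C_5 \geq \ord C_2 + 3 \geq 5$ and $\ord C_6 \geq \ord C_1 + 5 \geq 5.5$, and similarly $\ord C_4 \geq \ord C_3 + 1 \geq 4$. Now the points $(0,0)$, $(1,\tfrac12)$, $(2,\ge 2)$, $(3,\ge 3)$, $(4,\ge 4)$, $(5,\ge 5)$, $(6,\ge 5.5)$, $(7,7)$ all lie on or above the segment from $(1,\tfrac12)$ to $(6, \tfrac{11}{2})$, which has slope $1$; since the polygon must pass through $(1,\tfrac12)$ and $(6,\tfrac{11}{2})$ (the latter because the first and last slopes are pinned and symmetry forces $\ord C_6 = \tfrac{11}{2}$), the five middle segments all have slope $1$. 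Hence the slope multiset is $\{\tfrac12 \text{ once}, 1 \text{ five times}, \tfrac32 \text{ once}\}$, i.e. the claimed $\{(0.5,1.0),(1.0,5.0),(1.5,1.0)\}$.

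The main obstacle is verifying that the interior points genuinely cannot dip below the slope-$1$ line and thereby force an extra breakpoint — i.e. that the inequalities from Lemma~\ref{lemma:pointsymm} really do pin $\ord C_2 = 2$, $\ord C_3 = 3$, etc., rather than merely bounding them. This is handled by playing the upper bounds of Lemma~\ref{lemma:xi2_c2c3} against the lower bounds of Lemma~\ref{lemma:pointsymm}: for instance $\ord C_6 \geq \ord C_1 + 5 = \tfrac{11}{2}$ while the endpoint constraint and the already-determined last slope force $\ord C_6 \leq \tfrac{11}{2}$, so $\ord C_6 = \tfrac{11}{2}$ exactly, and then symmetry propagates equalities back to $C_2$ and $C_3$. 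Once that pinning is established the convex hull computation is immediate, and the only thing to double‑check is the arithmetic of the vertex coordinates against the figure. I would close by noting that this polygon differs from the one in Proposition~\ref{prop:NP1} (which had a slope-$\tfrac34$ segment), establishing Theorem~\ref{thm:counterexample}.
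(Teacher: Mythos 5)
Your proposal is correct and follows essentially the same route as the paper: first slope $\tfrac12$ from $\ord{C_1}$ together with the $y=\tfrac12 x$ lower bound of Proposition~\ref{prop:matrixbound}, last slope $\tfrac32$ by symmetry, and then Lemma~\ref{lemma:pointsymm} combined with $\ord{C_2}\geq 2$, $\ord{C_3}\geq 3$ to force the five middle slopes to equal $1$. The only nitpick is that what symmetry directly pins is that the polygon passes through $(6,\tfrac{11}{2})$ (because the last slope is exactly $\tfrac32$), not a priori that $\ord{C_6}=\tfrac{11}{2}$; but the former is all your convexity argument actually uses, so the proof stands as written.
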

\begin{proof}
Just as we did in Proposition~\ref{prop:NP1}, by Lemma~\ref{lemma:pointsymm}, % yields the Newton polygon.
$\ord{C_4}\geq 4$, $\ord{C_5}\geq 5$ and $\ord{C_6}\geq 5.5$. But by symmetry, since the first slope is again $\frac{1}{2}$, the last slope is $2-\frac{1}{2} = 1.5$. However, because $\ord C_i\geq i$ for $i=2,3,4,5$, the middle slopes must be $1$.
\end{proof}

\end{document}